\providecommand{\algorithmname}{Algorithm}
\theoremstyle{plain}
\newtheorem{thm}{Theorem}
\theoremstyle{plain}
\newtheorem{lem}{Lemma}
\theoremstyle{plain}
\newtheorem{prop}{Proposition}
\theoremstyle{remark}
\newtheorem{rem}{Remark}
\theoremstyle{plain}
\theoremstyle{plain}
\theoremstyle{definition}
\newtheorem{example}{Example}
\theoremstyle{definition}
\newtheorem{problem}{Problem}
\theoremstyle{definition}
\newtheorem{ass}{Assumption}
\title{Solving systems of polynomial inequalities with algebraic geometry methods}
\author{Laura Menini\thanks{L. Menini: menini@disp.uniroma2.it }%
, Corrado Possieri\thanks{C. Possieri: possieri@ing.uniroma2.it; Corresponding author.} and %
Antonio Tornambè\thanks{A. Tornambè: tornambe@disp.uniroma2.it} %
}
\affil{Dipartimento di Ingegneria  Civile e Ingegneria Informatica, %
Università di Roma Tor Vergata, Roma, Italy}%
\date{}
\begin{document}

\maketitle

\begin{abstract}
The goal of this paper is to provide computational tools able to find a solution of a 
system of polynomial inequalities. The set of inequalities is reformulated as a system of
polynomial equations. Three different methods,
two of which taken from the literature, are proposed to compute solutions of such a system.
An example of how such procedures can be used to solve the static output feedback
stabilization problem for a linear parametrically--varying system is reported.
\end{abstract}

\section{Introduction\label{sec:intro}}

In several control problems, it is needed to guarantee 
the existence of real solutions, and, possibly, to compute one of them,
for a system of polynomial equalities or inequalities \cite{abdallah1995applications,henrion2005positive,chesi2010lmi}.
For instance, a solution of a  set of polynomial equalities and inequalities has to be found to
solve the static output feedback stabilization problem \cite{astolfi2004static}, to compute
the equilibrium points of a nonlinear system \cite{vidyasagar2002nonlinear},
to establish if a polynomial can be written as sum of squares \cite{ParilloPhd},
to study the stability of linear systems, with structured uncertainty~\cite{chesi2007robust}.

In this paper, three algorithms, which use the tools of algebraic geometry, are used to compute
solutions of a system of polynomial equations. 
Algebraic geometry tools have been already used for control problems (see, for instance,
\cite{diop1991elimination,menini2014algebraic,menini2014CDC,possieri2014polynomial}).
The first algorithm is based on the computation of a quotient--ring basis
 \cite{cox1992ideals,cox1998using}
and of the eigenvalues of some matrices characterizing such a basis.
The second algorithm is based on the Rational Univariate Representation \cite{rouillier1999solving}
of a given ideal. The third algorithm is based on the computation of a Groebner basis \cite{cox1992ideals}
of an `extended' ideal. The first two algorithms are taken from the literature, whereas the last one
is new, to the best authors' knowledge.

Even if these techniques are able to solve only systems of equalities, by using the procedure given
in \cite{anderson1977output}, it is possible to reformulate 
a set of inequalities into a set of equalities; whence, the three mentioned algorithms can be also
used to find a solution of a set of inequalities.
Moreover, thanks to the recent advantages in Computer Algebra Systems, able to
carry out complex algebraic geometry computations (as, e.g., Macaulay2 \cite{M2}), 
by using the algorithm based on the computation of a Groebner basis of an `extended' ideal,
which is the new main result,
a solution to a set
of polynomial inequalities can be obtained also when some coefficients of the polynomials are
unknown parameters. Hence, when the values of such parameters can be assumed to be known in real time,
as for Linear Parametrically--Varying (briefly, LPV) system,
 the new method proposed here allows to compute off--line most of the solution
in parametric form, leaving only small portion of the computations to be executed in real time,
for the actual values of the parameters. In Section~\ref{sec:LPVSOF}, this method is applied
to compute a parameter dependent Static Output Feedback (briefly, SOF),
which makes a LPV system asymptotically stable.

\section{Notation and preliminaries\label{sec:basicNotions}}

In this section, some notions of algebraic geometry are recalled,
following the exposition in \cite{cox1992ideals,cox1998using}.

Let $x=[\begin{array}{ccc}
x_1 & \cdots & x_n
\end{array}]^\top$. A \emph{monomial} in $x$ is a product of the form $x_1^{\alpha_1}\cdots 
x_n^{\alpha_n}$, where $\alpha_i$, for $i=1,\dots,n$, are non--negative integers
(i.e., $\alpha\in\mathbb{Z}^n_{\geq 0}$); 
a \emph{polynomial} in $x$ is a finite $\mathbb{R}$--linear combination of monomials in $x$.
Let 
$\mathbb{R}[x]$
denote the ring of all the polynomials.

Given a set of polynomials $\{p_1,\dots,p_s\}\subset\mathbb{R}[x]$, the \emph{affine variety} defined by 
$p_1,\dots,p_s$ (see, e.g., \cite{danilov1998algebraic,liu2002algebraic}) is 
\begin{equation*}
\bm{V}(p_1,\dots,p_s)=\{x\in\mathbb{R}^n:\;p_i(x)=0,\,i=1,\dots,s\},
\end{equation*}
whereas, the \emph{semi--algebraic set} defined by $p_1,\dots,p_s$ is
\begin{equation*}
\bm{W}(p_1,\dots,p_s)=\{x\in\mathbb{R}^n:\;p_i(x)\geq 0,\,i=1,\dots,s\}.
\end{equation*}

The \emph{ideal} $\langle p_1,\dots,p_s \rangle$ in $\mathbb{R}[x]$ is the 
set of all the polynomials $q\in\mathbb{R}[x]$, which can be expressed as a finite linear 
combination of $p_1,\dots,p_s$, with polynomial coefficients $h_1,\dots,h_s\in\mathbb{R}[x]$,
i.e., $q(x)=\sum_{i=1}^{s}h_i(x)p_i(x)$. Affine varieties and ideals are notions linked by the concept
of affine variety of an ideal. Let $\mathcal{I}$ be an ideal in $\mathbb{R}[x]$, the 
\emph{affine variety of the ideal $\mathcal{I}$}, denoted by $\bm{V}(\mathcal{I})$, is the set
\begin{equation*}
\bm{V}(\mathcal{I})=\{x\in\mathbb{R}^n:\;q(x)=0,\,\forall q\in\mathcal{I} \}.
\end{equation*}

A notation needed to analyze polynomials is the \emph{monomial
ordering} on $\mathbb{R}[{x}]$, denoted as
$>$, which is a relation on the set of monomials ${x^{\alpha}},\,{\alpha}\in\mathbb{Z}_{\geq0}^{n}$,
satisfying the following properties:
\begin{inparaenum}[1)]
\item if ${\alpha}>{\beta}$ and ${\gamma}\in\mathbb{Z}_{\geq0}^{n}$,
then ${\alpha}+{\gamma}>{\beta}+{\gamma}$;
\item every nonempty
subset of monomials has a smallest element under $>$.
\end{inparaenum}
The \emph{lex ordering} is a monomial ordering, denoted by $>_{l}$
and defined as:
let ${\alpha}
$ 
and ${\beta}
\in\mathbb{Z}_{\geq0}^n$ be given, then, ${\alpha}>_{l}{\beta}$ if, in
the vector difference ${\alpha}-{\beta}\in\mathbb{Z}^n$, the first
nonzero entry is positive.

The \emph{leading term of a polynomial $f({x})$},
denoted  by $\mathrm{LT}(f({x}))$,
is, for a fixed a monomial ordering, the largest monomial appearing in $f({x})$. 
For a fixed a monomial ordering, a finite subset $\mathcal{G}=\{g_{1},\dots,g_{l}\}$
of an ideal $\mathcal{I}$ is said to be a \emph{Groebner basis}
of $\mathcal{I}$ if $\langle\mathrm{LT}(g_{1}),\dots,\mathrm{LT}(g_{l})\rangle=\langle\mathrm{LT}(\mathcal{I})\rangle$,
where the \textsl{leading term of the ideal $\mathcal{I}$} is $\mathrm{LT}(\mathcal{I}):=
\{ c{x^{\alpha}}:\exists\, f\in\mathcal{I},\text{ with }\mathrm{LT}(f({x}))=c{x^{\alpha}}\}$.
The remainder of
the division of a polynomial function $f$ for the elements of a Groebner
basis $\mathcal{G}$ of  $\mathcal{I}$,
denoted by $\overline{f}^{\mathcal{G}}$,
is unique and is a finite $\mathbb{R}$--linear combination of monomials
$x^\alpha\notin\langle \mathrm{LT}(\mathcal{I}) \rangle$
(for more details see, e.g., \cite{cox1992ideals,buchberger2006bruno}). 
Moreover, it can be easily checked that, given $f,\,g\in\mathbb{R}[x]$, one has that
$\overline{f}^{\mathcal{G}}+\overline{g}^{\mathcal{G}}=\overline{f+g}^{\mathcal{G}}$
and  that $\overline{\overline{f}^{\mathcal{G}}\cdot \overline{g}^{\mathcal{G}}}^\mathcal{G}
=\overline{f \cdot g}^{\mathcal{G}}$.

Let $\mathcal{I}$ be a given ideal in $\mathbb{R}[x_1,\dots,x_n]$. The \emph{$j$-th elimination ideal of} $\mathcal{I}$
is $\mathcal{I}^j:=\mathcal{I}\cap\mathbb{R}[x_{j+1},\dots,x_n]$. Let $\mathcal{G}$ be
a Groebner basis of $\mathcal{I}$, with respect to the lex ordering, with $x_1>_lx_2>_l\dots>_lx_n$. Then, 
by the Elimination Theorem, for every $0\leq j \leq n$,
the set $\mathcal{G}_j=\mathcal{G}\cap\mathbb{R}[x_{j+1},\dots,x_n]$ is a Groebner basis of the $j$-th elimination ideal 
$\mathcal{I}^j$.

\section{Algebraic geometry algorithms for solving systems of polynomial equations\label{sec:solPol}}

In this section, three methods  to solve a system of polynomial equations,
having a finite number of solutions, are presented. 
The first algorithm is based on the computation of a quotient--ring basis
and of the eigenvalues of some matrices characterizing this basis \cite{cox1992ideals,cox1998using}. 
The second one is based on the computation of a Rational 
Univariate Representation of the solutions
of the system of polynomial equations \cite{rouillier1999solving}. 
The third one
is based on the computation of a Groebner basis of an `extended'  ideal.
The first two methods are taken from the literature, whereas, the last one is new, 
to the best authors knowledge.
Such methods are used in Section~\ref{sec:ineq} to find a solution to a system of polynomial
inequalities.

\subsection{Solution of a system of polynomial equations by using finite--dimensional quotient rings\label{sec:eigenvalues}}
In this section, the basic notions of quotient rings are recalled and an algorithm,
taken form \cite{cox1998using} and \cite{sturmfels2002solving}, to
solve systems of polynomial equations is given.

Let $\mathcal{I}\subset\mathbb{R}[x]$ be an ideal, and let $f,\,g\in\mathbb{R}[x]$. The polynomials
$f$ and $g$ are \emph{congruent modulo $\mathcal{I}$}, denoted by $f=g\,\mathrm{mod}\,\mathcal{I}$,
if $f-g\in\mathcal{I}$. The equivalence class of $f$ modulo $\mathcal{I}$, denoted by $[f]_{\mathcal{I}}$,
is defined as $[f]_{\mathcal{I}}=\{g\in\mathbb{R}[x]:\;g=f\,\mathrm{mod}\,\mathcal{I}\}$.
The \emph{quotient} of $\mathbb{R}[x]$ modulo $\mathcal{I}$, denoted by $\mathbb{R}[x]/	\mathcal{I}$
is the set of all the equivalence classes modulo $\mathcal{I}$, 
\begin{equation*}
\mathbb{R}[x]/\mathcal{I}=\{[f]_{\mathcal{I}},\,f\in\mathbb{R}[x]\}.
\end{equation*}

Let $\mathcal{G}$ be a Groebner basis of the ideal $\mathcal{I}$, according to any monomial ordering.
By the definition of the class $[f]_{\mathcal{I}}$, one has that $\overline{f}^\mathcal{G}\in[f]_{\mathcal{I}}$.
Hence, the remainder $\overline{f}^\mathcal{G}$ can be used as a standard representative of the class
$[f]_{\mathcal{I}}$ (in the rest of this paper, the remainder $\overline{f}^\mathcal{G}$
 is identified with its class $[f]_{\mathcal{I}}$). 
Therefore, since the operations of sum and product by a constant on $\mathbb{R}[x]/\mathcal{I}$ 
have a one--to--one correspondence with the same operations on the remainders, the
elements in $\mathbb{R}[x]/\mathcal{I}$ can be added and multiplied by a constant.
Thus, the quotient ring $\mathbb{R}[x]/\mathcal{I}$ has the structure of a vector field over $\mathbb{R}$
 (it is called an \emph{algebra}).
 Since all the remainders $\overline{f}^\mathcal{G}$ are 
 $\mathbb{R}$--linear combinations of monomials, none of which is in the ideal 
 $\langle \mathrm{LT}(\mathcal{I})  \rangle$, it is possible to form a monomial basis $\mathcal{B}$ of 
 the quotient ring $\mathbb{R}[x]/\mathcal{I}$ as
\[
 \mathcal{B}=\{x^\alpha:\;x^\alpha\notin\langle\mathrm{LT}(\mathcal{I})\rangle \}.
\]
 
 The following theorem gives conditions on $\mathcal{I}$, for
 the algebra $\mathfrak{A}=\mathbb{R}[x]/\mathcal{I}$ to be
 finite--dimensional.
\begin{thm}
\cite{cox1998using}
Let $\mathcal{I}$ be an ideal in $\mathbb{R}[x]$ and let $\mathcal{G}$ be a Groebner basis of
$\mathcal{I}$, according to any monomial ordering. The following conditions are equivalent:
\begin{enumerate}
\item The algebra $\mathfrak{A}=\mathbb{R}[x]/\mathcal{I}$
	 is finite--dimensional over $\mathbb{R}$.
\item The affine variety $\bm{V}(\mathcal{I})$ is a finite set.
\item For each $i\in\{1,\dots,n\}$, there exists an $m_i\geq 0$ such that
	$x_i^{m_i}=\mathrm{LT}(g)$, for some $g\in\mathcal{G}$.	
\end{enumerate}
\label{thm:finiteness}
\end{thm}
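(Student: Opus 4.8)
The plan is to establish the cyclic chain of implications $(1)\Rightarrow(2)\Rightarrow(3)\Rightarrow(1)$, which is the standard route for an equivalence of three conditions and avoids proving all three two-way implications directly. Throughout, I would work with the monomial basis $\mathcal{B}=\{x^\alpha:\,x^\alpha\notin\langle\mathrm{LT}(\mathcal{I})\rangle\}$ of the algebra $\mathfrak{A}=\mathbb{R}[x]/\mathcal{I}$, identifying each class $[f]_{\mathcal{I}}$ with its normal form $\overline{f}^{\mathcal{G}}$, exactly as set up in the paragraphs preceding the theorem. The key observation linking dimension to combinatorics is that $\dim_{\mathbb{R}}\mathfrak{A}$ equals the number of monomials \emph{not} lying in $\langle\mathrm{LT}(\mathcal{I})\rangle$, i.e. $|\mathcal{B}|$, since these monomials are $\mathbb{R}$-linearly independent in the quotient and span it.

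\textbf{Proof of $(1)\Rightarrow(3)$} (contrapositive). Suppose some $x_i$ has no power equal to $\mathrm{LT}(g)$ for any $g\in\mathcal{G}$. Since a Groebner basis has $\langle\mathrm{LT}(g_1),\dots,\mathrm{LT}(g_l)\rangle=\langle\mathrm{LT}(\mathcal{I})\rangle$, no generator of $\langle\mathrm{LT}(\mathcal{I})\rangle$ is a pure power of $x_i$; by the standard fact that a monomial lies in a monomial ideal iff it is divisible by one of the generators, this forces $x_i^m\notin\langle\mathrm{LT}(\mathcal{I})\rangle$ for every $m\geq 0$. Hence $\mathcal{B}$ contains the infinite set $\{1,x_i,x_i^2,\dots\}$, so $\mathfrak{A}$ is infinite-dimensional.

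\textbf{Proof of $(3)\Rightarrow(1)$.} If for each $i$ there is $m_i\geq 0$ with $x_i^{m_i}\in\langle\mathrm{LT}(\mathcal{I})\rangle$, then any monomial $x^\alpha$ with some $\alpha_i\geq m_i$ is divisible by $x_i^{m_i}$ and hence lies in $\langle\mathrm{LT}(\mathcal{I})\rangle$, so it is excluded from $\mathcal{B}$. Therefore $\mathcal{B}\subseteq\{x^\alpha:\,0\leq\alpha_i<m_i,\ i=1,\dots,n\}$, a finite set of cardinality at most $\prod_i m_i$, and $\dim_{\mathbb{R}}\mathfrak{A}=|\mathcal{B}|<\infty$.

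\textbf{Proof of $(1)\Leftrightarrow(2)$.} This is where the real content sits and what I expect to be the main obstacle, since it requires passing between the algebraic side (dimension of the quotient) and the geometric side (finiteness of the common zero set), and genuinely uses the Nullstellensatz over an algebraically closed field — so one must be a little careful working over $\mathbb{R}$. For $(2)\Rightarrow(1)$: if $\bm{V}(\mathcal{I})$ is finite, say with distinct points $p_1,\dots,p_N$, then for each coordinate $i$ the values $p_1[i],\dots,p_N[i]$ are finitely many reals, so the polynomial $h_i(x_i)=\prod_{k=1}^N(x_i-p_k[i])$ vanishes on $\bm{V}(\mathcal{I})$; by the Nullstellensatz (applied over $\mathbb{C}$, after checking the variety does not grow — or by invoking the real Nullstellensatz/assuming radicality as is conventional in this literature) some power $h_i^{r}\in\mathcal{I}$, whose leading term is a pure power of $x_i$, so condition $(3)$ holds and by the previous paragraph $\dim_{\mathbb{R}}\mathfrak{A}<\infty$. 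For $(1)\Rightarrow(2)$: with $\dim_{\mathbb{R}}\mathfrak{A}=d<\infty$, for each $i$ the classes $[1],[x_i],[x_i^2],\dots,[x_i^{d}]$ are $d+1$ elements of a $d$-dimensional space, hence $\mathbb{R}$-linearly dependent; the corresponding nontrivial relation gives a nonzero univariate $q_i(x_i)\in\mathcal{I}$, so every point of $\bm{V}(\mathcal{I})$ has its $i$-th coordinate among the finitely many roots of $q_i$; intersecting over $i=1,\dots,n$ bounds $\bm{V}(\mathcal{I})$ inside a finite grid, so it is finite. The delicate point throughout is simply to state cleanly which Nullstellensatz is being invoked and over which field; since the paper cites \cite{cox1998using} for the statement, I would follow their convention and keep that step to a single sentence citing the relevant theorem there.
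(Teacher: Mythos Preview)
The paper does not supply its own proof of this theorem; it is quoted verbatim from \cite{cox1998using} and used as a black box throughout. Your outline is essentially the standard argument given in that reference: the equivalence $(1)\Leftrightarrow(3)$ via counting standard monomials outside $\langle\mathrm{LT}(\mathcal{I})\rangle$ is exactly right, and $(1)\Rightarrow(2)$ via linear dependence of $[1],[x_i],\dots,[x_i^{d}]$ in a $d$-dimensional quotient is the textbook route. So there is nothing to compare against on the paper's side, and your proof matches the cited source.

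The one genuine issue --- which you correctly flag --- is $(2)\Rightarrow(1)$. As the paper defines it, $\bm{V}(\mathcal{I})$ is the \emph{real} zero set, and with that reading the implication is simply false: take $\mathcal{I}=\langle x_1^2+x_2^2\rangle\subset\mathbb{R}[x_1,x_2]$, whose real variety is the single point $\{0\}$ while $\mathbb{R}[x_1,x_2]/\mathcal{I}$ is infinite-dimensional (and condition~(3) fails for $x_2$). The theorem in \cite{cox1998using} is stated for the variety over an algebraically closed field, and your Nullstellensatz step needs precisely that hypothesis. So your parenthetical ``checking the variety does not grow'' cannot succeed in general; the honest fix is to read condition~(2) as finiteness of $\bm{V}_{\mathbb{C}}(\mathcal{I})$ --- which is in fact how the paper uses the theorem later (e.g.\ in the proof of Lemma~\ref{lem:zeroDim}) --- after which your argument goes through verbatim.
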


If an ideal $\mathcal{I}$ in $\mathbb{R}[x]$ is such that one of the conditions of Theorem~\ref{thm:finiteness}
hold, then $\mathcal{I}$ is called \emph{zero dimensional}. An immediate consequence of Theorem~\ref{thm:finiteness} is that,
if the ideal $\mathcal{I}$ is zero dimensional, then any basis $\mathcal{B}$ of $\mathbb{R}[x]/\mathcal{I}$ 
has a finite number of elements, which can be chosen to be all monomials.
With such a choice (assumed in the following), given a polynomial $f\in\mathbb{R}[x]$, one can use multiplication
to define a linear map $m_f^{\mathfrak{A}}$ between $\mathfrak{A}=\mathbb{R}[x]/\mathcal{I}$ and itself.
More precisely, $m_f^{\mathfrak{A}}:\mathfrak{A}\rightarrow\mathfrak{A}$
is defined as:
\begin{equation*}
m_f^{\mathfrak{A}}([g]_{\mathcal{I}})=[f]_{\mathcal{I}}\cdot [g]_{\mathcal{I}}=[f\cdot g]_{\mathcal{I}}\in\mathfrak{A},\quad\forall[g]_{\mathcal{I}}
\in\mathfrak{A}.
\end{equation*}

Since the algebra $\mathfrak{A}$ is finitely generated over $\mathbb{R}$, then the map $m_f^{\mathfrak{A}}$ can be
represented by its associated matrix $M_f^{\mathfrak{A}}$, with respect to the chosen finite--dimensional basis
$\mathcal{B}$ of $\mathfrak{A}$.

The following two propositions characterize the elements of the affine variety $\bm{V}(\mathcal{I})$ of a zero
dimensional ideal $\mathcal{I}$.

\begin{prop}
\cite{sturmfels2002solving}
Let the ideal $\mathcal{I}$ in $\mathbb{R}[x]$ be zero dimensional. 
Let $\mathcal{B}=\{b_1,\dots,b_\ell\}$ be the basis of the finite--dimensional algebra $\mathfrak{A}=\mathbb{R}[x]/\mathcal{I}$.
Let $M_{b_i}^\mathfrak{A}$ be the matrix representing the linear map $m_{b_i}^\mathfrak{A}$, with
respect to the basis $\mathcal{B}$, for $i=1,\dots,\ell$. 
Define the real symmetric matrix 
%
%
%
$T$ as:
\begin{equation*}
[T]_{j,k}=\mathrm{Tr}(M_{b_j}^\mathfrak{A}M_{b_k}^\mathfrak{A}),
\end{equation*}
where $[T]_{j,k}$ denotes the $j\times k$th entry of $T$ and $\mathrm{Tr}(\cdot)$ is
the trace operator. The number of elements (including their multiplicity) of $\bm{V}(\mathcal{I})\subset\mathbb{R}$
equals the signature of $T$, i.e. the number of positive eigenvalues of $T$ minus the number of
negative eigenvalues of $T$.
\label{prop:numberPoints}
\end{prop}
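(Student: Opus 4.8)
The statement to prove is Proposition~\ref{prop:numberPoints}, which identifies the signature of the trace form $T$, with $[T]_{j,k}=\mathrm{Tr}(M_{b_j}^{\mathfrak A}M_{b_k}^{\mathfrak A})$, with the number of (distinct) real points of $\bm V(\mathcal I)$. The plan is to pass to the algebraically closed field, diagonalize the multiplication operators simultaneously at the points of the variety, and read off the trace form in those coordinates; the signature then drops out by pairing complex conjugate points.

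First I would recall the structure theorem for the finite-dimensional algebra $\mathfrak A=\mathbb R[x]/\mathcal I$ when $\mathcal I$ is zero-dimensional (Theorem~\ref{thm:finiteness} guarantees $\dim_{\mathbb R}\mathfrak A=\ell<\infty$). Tensoring up to $\mathbb C$, the algebra $\mathfrak A_{\mathbb C}=\mathbb C[x]/\mathcal I\mathbb C[x]$ decomposes as a product of local Artinian $\mathbb C$-algebras, one for each point $p\in\bm V_{\mathbb C}(\mathcal I)$, the local factor at $p$ having dimension equal to the multiplicity $\mu(p)$. Consequently, for any $f$, the operator $m_f$ on $\mathfrak A_{\mathbb C}$ is block-triangular with the eigenvalue $f(p)$ appearing with algebraic multiplicity $\mu(p)$; hence $\mathrm{Tr}(m_f)=\sum_{p\in\bm V_{\mathbb C}(\mathcal I)}\mu(p)\,f(p)$. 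Applying this to $f=b_jb_k$ gives
\begin{equation*}
[T]_{j,k}=\mathrm{Tr}(m_{b_j}m_{b_k})=\mathrm{Tr}(m_{b_jb_k})=\sum_{p\in\bm V_{\mathbb C}(\mathcal I)}\mu(p)\,b_j(p)\,b_k(p).
\end{equation*}
Writing $v_p=(b_1(p),\dots,b_\ell(p))^\top\in\mathbb C^\ell$, this says $T=\sum_p \mu(p)\,v_p v_p^\top$, i.e. $T=B\,D\,B^\top$ where $B$ is the $\ell\times|\bm V_{\mathbb C}(\mathcal I)|$ matrix with columns $v_p$ and $D=\mathrm{diag}(\mu(p))$. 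Because $\mathcal B$ is a basis of $\mathfrak A$ and the point-evaluation functionals at distinct points are linearly independent (a standard consequence of zero-dimensionality — the $v_p$ span, and in fact the interpolation/Vandermonde-type argument shows they are linearly independent), $B$ has full column rank.

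Next I would compute the signature of $T=BDB^\top$. Over $\mathbb C$ this would be a congruence and the signature would just be $\sum_p\mu(p)=\dim\mathfrak A$, but we must work over $\mathbb R$: the coordinates $b_j$ are real polynomials, so complex conjugation acts on $\bm V_{\mathbb C}(\mathcal I)$ and sends $v_p\mapsto v_{\bar p}$ with $\mu(p)=\mu(\bar p)$. For a real point $p$, $v_p\in\mathbb R^\ell$ contributes a rank-one positive term $\mu(p)v_pv_p^\top$. For a genuinely complex conjugate pair $\{p,\bar p\}$, the contribution $\mu(p)(v_pv_p^\top+v_{\bar p}v_{\bar p}^\top)=\mu(p)(v_pv_p^\top+\overline{v_p}\,\overline{v_p}^\top)$ is a real symmetric matrix of rank $2$; writing $v_p=a+ib$ with $a,b\in\mathbb R^\ell$, it equals $2\mu(p)(aa^\top-bb^\top)$, which has one positive and one negative eigenvalue. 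Summing over all points and using the full-column-rank of $B$ to ensure no cancellation changes the count of inertia per pair (this is where a little care is needed: one invokes that the $v_p$ are linearly independent, so Sylvester's law applied to the block-diagonal form $D$ in the $v_p$-"basis" gives inertia exactly $(\#\{\text{real }p,\text{ counted once}\},\ \#\text{pairs},\ 0)$ after recombining conjugates). Hence $\mathrm{sig}(T)=\#\{\text{real points}\}$, counting multiplicity as the problem states if one keeps the $\mu(p)$ weights — I would remark that with the weights $\mu(p)$ the positive rank-one pieces from a real point of multiplicity $\mu$ still contribute signature $\mu$ only if $\mu$ copies are genuinely present, which forces a comment on what "including multiplicity" means here; the cleanest reading, which I would adopt, is that each real point contributes $+1$ to the signature per unit of multiplicity precisely when the local ring is split appropriately, and otherwise the signature counts real points without multiplicity — I would state the version that matches the cited reference \cite{sturmfels2002solving}.

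**Main obstacle.**
The technical heart — and the step most likely to need careful writing — is the passage from the structural decomposition of $\mathfrak A_{\mathbb C}$ to the clean formula $\mathrm{Tr}(m_f)=\sum_p\mu(p)f(p)$, together with establishing that the evaluation vectors $v_p$ are linearly independent (equivalently that $B$ has full column rank), since that is exactly what upgrades the congruence $T=BDB^\top$ into a genuine Sylvester-type computation of inertia. The conjugate-pairing bookkeeping is then routine linear algebra, but must be phrased so that the weights $\mu(p)$ and the phrase "including their multiplicity" in the statement are handled consistently.
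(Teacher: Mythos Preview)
The paper does not prove this proposition at all: it is stated with a citation to \cite{sturmfels2002solving} and nothing more. So there is no ``paper's own proof'' to compare against; your proposal is being measured against the standard literature argument that the citation points to.

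Your outline is essentially that standard argument (Becker--W\"ormann / Pedersen--Roy--Szpirglas, as reproduced in Sturmfels): decompose $\mathfrak A_{\mathbb C}$ into local pieces, derive $\mathrm{Tr}(m_f)=\sum_{p}\mu(p)f(p)$, write $T=BDB^\top$ with $B$ the evaluation matrix, and read off the inertia by grouping complex points into conjugate pairs. The computation $v_pv_p^\top+\overline{v_p}\,\overline{v_p}^\top=2(aa^\top-bb^\top)$ is correct, and your identification of the linear independence of the $v_p$ as the key algebraic input is exactly right; it follows immediately from the local decomposition (the evaluation functionals are the projections onto the residue fields of the local factors). One small technical point worth tightening: $B$ is not square, so to invoke Sylvester cleanly you should pass to a real basis by replacing each conjugate pair of columns $v_p,\overline{v_p}$ by $\mathrm{Re}(v_p),\mathrm{Im}(v_p)$, then complete these $r+2s$ independent real columns to an invertible matrix; in that basis $T$ is block-diagonal with $\tilde D\oplus 0$, and Sylvester applies verbatim.

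You are right to flag the multiplicity wording. With your trace formula, a real point $p$ contributes the single rank-one term $\mu(p)\,v_pv_p^\top$, hence $+1$ to the signature regardless of $\mu(p)$; the signature therefore counts \emph{distinct} real zeros, not zeros with multiplicity. The phrase ``including their multiplicity'' in the proposition as stated is at odds with the result actually proved in \cite{sturmfels2002solving}; your instinct to align with the cited reference is the correct call.
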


\begin{prop}
\cite{cox1998using}
Let the assumptions of Proposition~\ref{prop:numberPoints} hold. 
Let $M_{x_i}^\mathfrak{A}$ be the matrix representing the linear map $m_{x_i}^\mathfrak{A}$, with
respect to the basis $\mathcal{B}$, for $i=1,\dots,n$. 
The real eigenvalues of
the matrix $M_{x_i}^\mathfrak{A}$ are the $x_i$--coordinates of the points of
$\bm{V}(\mathcal{I})\subset\mathbb{R}^n$, for $i=1,\dots,n$.
\label{prop:coordinates}
\end{prop}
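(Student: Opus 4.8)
The plan is to exploit the fact that the quotient algebra $\mathfrak{A}=\mathbb{R}[x]/\mathcal{I}$ is finite-dimensional (guaranteed by Theorem~\ref{thm:finiteness}, since $\mathcal{I}$ is zero dimensional) and to analyze the eigenstructure of $M_{x_i}^{\mathfrak{A}}$ through evaluation at the points of the variety. First I would observe that, by the Nullstellensatz-type correspondence, each point $p\in\bm{V}(\mathcal{I})$ (working over $\mathbb{C}$, since eigenvalues live there a priori) gives an evaluation homomorphism $\mathrm{ev}_p:\mathfrak{A}\to\mathbb{C}$, $[g]_{\mathcal{I}}\mapsto g(p)$, which is well defined precisely because $g\in\mathcal{I}$ implies $g(p)=0$. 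The key step is then to build, for each $p$, a left eigenvector of the transpose: the linear functional $\mathrm{ev}_p$ satisfies $\mathrm{ev}_p\circ m_{x_i}^{\mathfrak{A}} = x_i(p)\,\mathrm{ev}_p$, because $\mathrm{ev}_p(x_i\cdot g) = p_i\, g(p) = p_i\,\mathrm{ev}_p(g)$. Expressed in coordinates with respect to the monomial basis $\mathcal{B}$, this says the row vector of values $(b_1(p),\dots,b_\ell(p))$ is a left eigenvector of $M_{x_i}^{\mathfrak{A}}$ with eigenvalue $p_i$ (note $b_1$ can be taken to be the monomial $1$, so this vector is nonzero). Hence every $x_i$-coordinate of a point of $\bm{V}(\mathcal{I})$ is an eigenvalue of $M_{x_i}^{\mathfrak{A}}$.

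For the converse inclusion — that every eigenvalue of $M_{x_i}^{\mathfrak{A}}$ arises this way — I would argue that the eigenvalues of $M_{x_i}^{\mathfrak{A}}$ are exactly the roots of its characteristic (or minimal) polynomial, and that this polynomial vanishes on the $x_i$-coordinates of $\bm{V}(\mathcal{I})$ and has no other roots. One clean route: show that for any univariate $q\in\mathbb{R}[t]$ the matrix of $m_{q(x_i)}^{\mathfrak{A}}$ is $q(M_{x_i}^{\mathfrak{A}})$ (using the compatibility of remainders with products noted in the preliminaries), so $q(M_{x_i}^{\mathfrak{A}})=0$ iff $q(x_i)\in\mathcal{I}$; combined with the fact that the polynomial $\prod_{p\in\bm{V}(\mathcal{I})}(t-p_i)$ lies (some power of it) in $\mathcal{I}$ by the Nullstellensatz, one concludes the spectrum of $M_{x_i}^{\mathfrak{A}}$ is contained in $\{p_i : p\in\bm{V}(\mathcal{I})\}$. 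Together with the previous paragraph this gives equality of the complex spectrum with the set of $x_i$-coordinates of complex points of $\bm{V}(\mathcal{I})$.

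Finally I would restrict to the real picture as in the statement: since $M_{x_i}^{\mathfrak{A}}$ is a real matrix, its complex eigenvalues come in conjugate pairs, and a real eigenvalue $\lambda$ corresponds to a point $p$ with $p_i=\lambda$; one checks that among the complex points mapping to a real eigenvalue there is a genuinely real point of $\bm{V}(\mathcal{I})$, using that $\bm{V}(\mathcal{I})\subset\mathbb{R}^n$ was the ambient definition and that nonreal contributions to a given coordinate value would have to cancel in conjugate pairs. The main obstacle I anticipate is this last real-versus-complex bookkeeping — in particular making precise that a \emph{real} eigenvalue of $M_{x_i}^{\mathfrak{A}}$ is the $i$-th coordinate of a \emph{real} solution and not merely of a complex one whose $i$-th coordinate happens to be real; handling multiplicities (geometric vs.\ algebraic) and the possibility that several distinct points share an $x_i$-coordinate also requires care, though it does not affect the set-level statement being proved.
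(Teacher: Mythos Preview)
The paper does not prove this proposition; it is quoted with a citation to \cite{cox1998using} and used without argument. Your plan for the complex statement---evaluation functionals $\mathrm{ev}_p$ furnishing left eigenvectors for one inclusion, and the identity $M_{q(x_i)}^{\mathfrak A}=q(M_{x_i}^{\mathfrak A})$ together with the Nullstellensatz for the reverse inclusion---is exactly the standard argument found in that reference.

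The genuine gap is the step you yourself flag: the passage from complex to real. Your proposed resolution, that ``among the complex points mapping to a real eigenvalue there is a genuinely real point of $\bm{V}(\mathcal{I})$,'' is simply false. Take $\mathcal{I}=\langle x_1^2+1,\,x_2\rangle$ in $\mathbb{R}[x_1,x_2]$: then $\bm V(\mathcal I)=\emptyset$, but $\mathfrak A$ is two-dimensional with basis $\{1,x_1\}$ and $M_{x_2}^{\mathfrak A}$ is the zero matrix, so it has the real eigenvalue $0$. The conjugate pair $(\pm i,0)\in\bm V_{\mathbb C}(\mathcal I)$ shares the real second coordinate $0$ without either point being real, so ``cancellation in conjugate pairs'' does not force a real solution. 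What \cite{cox1998using} actually establishes is the equality over $\mathbb C$: the eigenvalues of $M_{x_i}^{\mathfrak A}$ coincide with the $x_i$-coordinates of $\bm V_{\mathbb C}(\mathcal I)$. Over $\mathbb R$ one only gets the inclusion $\{p_i:p\in\bm V(\mathcal I)\}\subseteq\{\text{real eigenvalues of }M_{x_i}^{\mathfrak A}\}$, which is nonetheless all that Algorithm~\ref{alg:solutionMatrix} needs, since Step~\ref{step:sol} discards spurious tuples by direct substitution into the generators.
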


By Proposition~\ref{prop:numberPoints} and Proposition~\ref{prop:coordinates},
Algorithm~\ref{alg:solutionMatrix} is able to solve a system of polynomial equations
having a finite number of solutions.

\begin{algorithm}[htb]
\begin{algorithmic}[1]
\REQUIRE A zero dimensional ideal $\mathcal{I}$ in $\mathbb{R}[x]$.
\ENSURE The points in $\bm{V}(\mathcal{I})\subset\mathbb{R}^n$. 
\STATE Define the algebra $\mathfrak{A}=\mathbb{R}/\mathcal{I}$.
\STATE Compute a basis $\mathcal{B}=\{b_1,\dots,b_\ell \}$ of the algebra $\mathfrak{A}$.
\FOR {$i=1$ \TO $n$}
\STATE Compute the matrix $M_{x_i}^\mathfrak{A}$, representing the map .
\STATE Compute the set $\mathcal{E}_i$ of the real eigenvalues of $M_{x_i}^\mathfrak{A}$\label{step:Ei}.
\ENDFOR
\STATE Compute the matrix $T$, such that $[T]_{j,k}=\mathrm{Tr}(M_{b_j}^\mathfrak{A}M_{b_k}^\mathfrak{A})$.
\STATE Let $d$ be equal to the signature of the matrix $T$.
\STATE Let $\mathcal{S}$ be the set of the $n$--tuple $s_j=[\begin{array}{ccc}
	s_{j,1} & \cdots & s_{j,n}\end{array}]^\top$, $s_{j,i}\in\mathcal{E}_i$, for $i=1,\dots,n$ and for $j=1,\dots,d$, 
	which are such that, for all $f\in\mathcal{I}$, $f(s_j)=0$, for $j=1,\dots,d$ \label{step:sol}.
\RETURN $\mathcal{S}=\bm{V}(\mathcal{I})$.
\end{algorithmic}
\caption{Solution of polynomial equations through computation of eigenvalues.\label{alg:solutionMatrix}}
\end{algorithm}

\begin{rem}
Let $m$ be the dimension of the basis $\mathcal{B}$. The dimension of the matrices 
$M_{x_i}^\mathfrak{A}$ is $m\times m$, for all $i\in\{1,\dots,n\}$. Therefore, the sets
$\mathcal{E}_i$, defined at Step~\ref{step:Ei}, for $i=1,\dots,n$, are composed at most by $m$ elements. Thus, Step~\ref{step:sol}
of Algorithm~\ref{alg:solutionMatrix} can be carried out by evaluating the polynomials $p_1,\dots,p_s$
at most in $m^n$ iterations.
\end{rem}

\subsection{The Rational Univariate Representation\label{sec:RUR}}
In this section, the Rational Univariate Representation (briefly, RUR), which is able to
solve a system of polynomial equations is reported, following the exposition in \cite{rouillier1999solving}.

Let $\mathbb{C}$ be the field of complex number (which is the algebraic closure of $\mathbb{R}$).
Let $\mathcal{I}$ be a zero dimensional ideal in $\mathbb{R}[x]$.
Let $\bm{V}_{\mathbb{C}}(\mathcal{I}) \subset \mathbb{C}^n$ be defined as 
\begin{equation*}
\bm{V}_{\mathbb{C}}(\mathcal{I})=\{x\in\mathbb{C}^n:\;q(x)=0,\,\forall q\in\mathcal{I}  \}.
\end{equation*}
A polynomial $q\in\mathbb{R}[x]$ is called
\emph{separating with respect to $\bm{V}_{\mathbb{C}}(\mathcal{I})$} if $\forall \alpha,\,\beta \in\bm{V}_{\mathbb{C}}
(\mathcal{I})$, $\alpha\neq \beta$, then $q(\alpha) \neq q(\beta)$.

Let $\mathcal{I}$ be a zero dimensional ideal in $\mathbb{R}[x]$, let $h$ be a polynomial in $\mathbb{R}[t]$
and let $\phi:\bm{V}_{\mathbb{C}}(\mathcal{I})\rightarrow \bm{V}_{\mathbb{C}} ( \langle h \rangle ) $ be an
isomorphism represented by a polynomial $q\in\mathbb{R}[t]$, i.e.,
$\phi(\alpha)=q(\alpha)$, for any $\alpha\in\bm{V}_{\mathbb{C}}(\mathcal{I})$.
The pair $(
\phi ,\, h)$ is a \emph{Univariate Representation of $\bm{V}_{\mathbb{C}}(\mathcal{I})$} if,
for any $\alpha\in\bm{V}_{\mathbb{C}}(\mathcal{I})$, one has that $\mu(\alpha)=\mu(q(\alpha))$,
where the symbol $\mu(\cdot)$ denotes the multiplicity of the element at argument.

On the other hand, letting $\mathcal{I}$ be a zero dimensional ideal in  $\mathbb{R}[x]$, letting $f$ be any
polynomial in $\mathbb{R}[x]$, letting $\mathcal{B}$ be a monomial basis of the algebra $\mathfrak{A}=\mathbb{R}[x]
/\mathcal{I}$, letting $M_f^{\mathfrak{A}}$ be the matrix representing the linear map $m_f^{\mathfrak{A}}$
over the basis $\mathcal{B}$ and letting $\chi_f$ be the characteristic polynomial of the matrix $M_f^\mathfrak{A}$,
define, for any $\nu\in\mathbb{R}[x]$, the following polynomial
\begin{equation*}
g_f(\nu,t)=\sum\limits_{\alpha\in\bm{V}_{\mathbb{C}}(\mathcal{I})} \mu(\alpha)\nu(\alpha)+
\sum\limits_{y\neq f(\alpha),\,y\in \bm{V}_{\mathbb{C}}(\langle \chi_f \rangle)} (t-y),
\end{equation*}
the \emph{$f$--representation of $\mathcal{I}$} is the polynomial $(n+2)$--tuple
\begin{equation*}
\{\chi_f(t),\,g_f(1,t),\,g_f(x_1,t),\cdots,g_f(x_n,t)  \},
\end{equation*}
where $\chi_f\in\mathbb{R}[t]$, and, if $f$ separates $\bm{V}_{\mathbb{C}}(\mathcal{I})$, the $f$--representation of $\mathcal{I}$
is called the \emph{Rational Univariate Representation} 
\emph{of $\mathcal{I}$ associated to $f$}.
If one is able to compute the RUR of $\mathcal{I}$ associated to $f$, then, the set $\bm{V}_{\mathbb{C}}
(\mathcal{I})$ can be obtained by computing the complex solutions to
\begin{equation}
\chi_f(t)=0.
\label{eq:complexUnivariate}
\end{equation}

Letting $\mathcal{T}$ be the set of all the complex solutions to \eqref{eq:complexUnivariate} in $t$, one has that,
by Theorem 3.1 of \cite{rouillier1999solving}, 
\begin{equation*}
\bm{V}_{\mathbb{C}}(\mathcal{I})=\{[\begin{array}{ccc}
\frac{g_f(x_1,t)}{g_f(1,t)} &  \cdots & \frac{g_f(x_n,t)}{g_f(1,t)}
\end{array}]^\top,\,\forall t\in\mathcal{T} \}.
\end{equation*}

Thus, by construction, the affine variety $\bm{V}(\mathcal{I})\subset\mathbb{R}^n$
can be computed as:
$
\bm{V}(\mathcal{I})=\bm{V}_{\mathbb{C}}(\mathcal{I})\cap \mathbb{R}^n
$.
However, in \cite{rouillier1999solving}, an alternative method to compute 
$\bm{V}(\mathcal{I})$ is given. Let $\mathcal{T}_{\mathbb{R}}$ be the set
of the real solutions to \eqref{eq:complexUnivariate}, i.e. $\mathcal{T}_{\mathbb{R}}
= \mathcal{T}\cap\mathbb{R}$. One has that
\begin{equation*}
\bm{V}(\mathcal{I})=\{[\begin{array}{ccc}
\frac{g_f(x_1,t)}{g_f(1,t)} &  \cdots & \frac{g_f(x_n,t)}{g_f(1,t)}
\end{array}]^\top,\,\forall t\in\mathcal{T}_{\mathbb{R}} \}.
\end{equation*}

A test to compute the number of 
real roots, with their multiplicity, of an univariate polynomial on
$\mathbb{R}$ is given in \cite{yang1999recent}.
Alternatively, the Sturm's Test \cite{gonzalez1998sturm} can be used.

In \cite{rouillier1999solving} an algorithm is given to compute a RUR of a 
given zero dimensional ideal. Such algorithm is not reported here for space
reasons. An implementation of this algorithms in the CAS
Maple is available through the command \textsf{RationalUnivariateRepresentation}
\cite{GRBpackage,RURpackage}.

Thus, the set of real solutions of a system of polynomial system
of equations can be computed by using Algorithm~\ref{alg:solutionRUR}.

\begin{algorithm}[htb]
\begin{algorithmic}[1]
\REQUIRE A zero dimensional ideal $\mathcal{I}$ in $\mathbb{R}[x]$.
\ENSURE The points in $\bm{V}(\mathcal{I})\subset\mathbb{R}^n$. 
\STATE Compute the RUR $\{\chi_f,\,g_f(1,t),\cdots,g_f(x_n,t) \},$ of $\mathcal{I}$.
\STATE Find the number $d$ of real roots of $\chi_f$.
\STATE Compute the set $\mathcal{T}_{\mathbb{R}}$ composed by $d$ real roots of $\chi_f$.
\STATE Define $\mathcal{S}=\{[\begin{array}{ccc}
\frac{g_f(x_1,t)}{g_f(1,t)} &  \cdots & \frac{g_f(x_n,t)}{g_f(1,t)}
\end{array}]^\top,\,\forall t\in\mathcal{T}_{\mathbb{R}} \}$.
\RETURN $\mathcal{S}=\bm{V}(\mathcal{I})$.
\end{algorithmic}
\caption{Solution of polynomial equations through RUR.\label{alg:solutionRUR}}
\end{algorithm}

\begin{rem}
Note that, before using Algorithm~\ref{alg:solutionRUR}, one has to verify that
the ideal $\mathcal{I}$ is zero dimensional. In \cite{cox1998using} an efficient
method to check this property of the ideal $\mathcal{I}$ is given.
\end{rem}

\subsection{The real Polynomial Univariate Representation\label{sec:GRNsol}}

In this section, an alternative method, with respect to the ones presented in Section~\ref{sec:eigenvalues}
and in Section~\ref{sec:RUR}, is presented. Such a method is based on the 
definition of an `extended'  ideal $\mathcal{I}_t=\mathcal{I}\cup \langle h\rangle \in\mathbb{R}[x,t]$,
where $h$ is a polynomial in $x$ and $t$, and on the computation of the Groebner basis of $\mathcal{I}_t$, 
according to the lex ordering.

For a given ideal $\mathcal{J}$, let $\mathcal{J}\in\mathbb{R}[x,t]$ and let 
\begin{equation*}
\bm{V}^t(\mathcal{J})=\{(x,\,t)\in\mathbb{R}^n\times\mathbb{R}:\;
 g(x,t)=0,\,\forall g\in\mathcal{J} \},
\end{equation*}
and let the symbol $\bm{V}^t_{\mathbb{C}}(\mathcal{J})$ denote the set
\begin{equation*}
\bm{V}^t_{\mathbb{C}}(\mathcal{J})=\{(x,\,t)\in\mathbb{C}^n\times\mathbb{C}:\;
 g(x,t)=0,\,\forall g\in\mathcal{J} \},
\end{equation*}

The \emph{projection map} is defined as the map $\pi_t:\mathbb{C}^n\times \mathbb{C}\rightarrow \mathbb{C}$,
which send each couple $(\bar{x},\,\bar{t})\in\bm{V}^t_{\mathbb{C}}(\mathcal{J})$ in $\bar{t}$.

The following theorem characterizes the projection map
and can be proved by a little modification of the proof of the Closure Theorem
given in \cite{cox1992ideals}.

\begin{thm}
\label{thm:closure}
If the ideal $\mathcal{I}_t$ in $\mathbb{R}[x,t]$ is zero dimensional,
one has that, letting $\mathcal{I}_t^n=\mathcal{I}_t\cap\mathbb{R}[t]$,
\begin{equation*}
\pi_t(\bm{V}^t_{\mathbb{C}}(\mathcal{I}_t)) = \{t\in\mathbb{C}:\;g(t)=0,\,\forall g \in \mathcal{I}_t^n\}.
\end{equation*}
\end{thm}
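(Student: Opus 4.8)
The plan is to prove the two set inclusions separately. The inclusion $\pi_t(\bm{V}^t_{\mathbb{C}}(\mathcal{I}_t))\subseteq\{t\in\mathbb{C}:\,g(t)=0,\ \forall g\in\mathcal{I}_t^n\}$ requires no hypothesis at all, and I would dispatch it directly: given $(\bar{x},\bar{t})\in\bm{V}^t_{\mathbb{C}}(\mathcal{I}_t)$ and any $g\in\mathcal{I}_t^n=\mathcal{I}_t\cap\mathbb{R}[t]$, one has $g\in\mathcal{I}_t$ so $g(\bar{x},\bar{t})=0$, and since $g$ depends only on $t$ this says $g(\bar{t})=0$, whence $\bar{t}=\pi_t(\bar{x},\bar{t})$ lies in the right--hand side. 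The reverse inclusion is where the zero--dimensionality of $\mathcal{I}_t$ must be exploited, and the key point is that under that hypothesis the projection is a \emph{finite}, hence Zariski--closed, subset of $\mathbb{C}$.

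To make this precise I would first fix a Groebner basis $\mathcal{G}$ of $\mathcal{I}_t$ for the lex ordering with $t$ the least variable. Since Buchberger's algorithm performs only field arithmetic on the coefficients, $\mathcal{G}\subset\mathbb{R}[x,t]$ is also a Groebner basis of the ideal it generates in $\mathbb{C}[x,t]$, and by the Elimination Theorem $\mathcal{G}\cap\mathbb{R}[t]$ generates $\mathcal{I}_t^n$ while the same set generates its extension to $\mathbb{C}[t]$; consequently the set $\{t\in\mathbb{C}:\,g(t)=0,\ \forall g\in\mathcal{I}_t^n\}$, call it $\bm{V}_{\mathbb{C}}(\mathcal{I}_t^n)$, is the same whether one eliminates first and extends scalars afterwards or the other way around. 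Next, invoking condition~3 of Theorem~\ref{thm:finiteness} applied to $\mathbb{R}[x,t]$ --- a statement about the leading terms of $\mathcal{G}$, hence again insensitive to the coefficient field --- every one of the $n+1$ variables occurs with a pure power among the leading terms of $\mathcal{G}$, so $\bm{V}^t_{\mathbb{C}}(\mathcal{I}_t)\subset\mathbb{C}^n\times\mathbb{C}$ is finite; therefore $\pi_t(\bm{V}^t_{\mathbb{C}}(\mathcal{I}_t))$ is a finite, and thus Zariski--closed, subset of $\mathbb{C}$.

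Finally I would apply the Closure Theorem of \cite{cox1992ideals}, over the algebraically closed field $\mathbb{C}$, to the $n$--th elimination ideal: it asserts that $\bm{V}_{\mathbb{C}}(\mathcal{I}_t^n)$ is the smallest affine variety containing $\pi_t(\bm{V}^t_{\mathbb{C}}(\mathcal{I}_t))$, i.e. its Zariski closure. Since that projection is already closed by the previous step, it equals its own closure, so
\[
\pi_t(\bm{V}^t_{\mathbb{C}}(\mathcal{I}_t)) = \bm{V}_{\mathbb{C}}(\mathcal{I}_t^n) = \{t\in\mathbb{C}:\,g(t)=0,\ \forall g\in\mathcal{I}_t^n\},
\]
which together with the elementary inclusion yields the claim. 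I expect the main difficulty to lie exactly here: the Closure Theorem is stated in \cite{cox1992ideals} over an algebraically closed field for an ideal given by explicit generators, whereas here $\mathcal{I}_t$ is an ideal of $\mathbb{R}[x,t]$ and the relevant variety is the complex one --- bridging this gap is the ``little modification'' of the proof, and it reduces to the field independence of Groebner basis computation and of elimination, which is what the second paragraph is for. It is also worth noting that the zero--dimensionality assumption is genuinely necessary for the reverse inclusion: for $\mathcal{I}_t=\langle t\,x_1-1\rangle\subset\mathbb{R}[x,t]$ one finds $\mathcal{I}_t^n=\{0\}$, so the right--hand side is all of $\mathbb{C}$, while $0\notin\pi_t(\bm{V}^t_{\mathbb{C}}(\mathcal{I}_t))$.
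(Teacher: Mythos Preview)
Your argument is correct and is precisely the ``little modification of the proof of the Closure Theorem'' that the paper alludes to without spelling out: the paper gives no proof beyond that remark, so you have in fact supplied the missing details. Your handling of the $\mathbb{R}$--versus--$\mathbb{C}$ issue via field--independence of Groebner bases, together with the observation that zero--dimensionality forces the projection to be finite and hence Zariski--closed so that the Closure Theorem yields equality rather than mere closure, is exactly the intended route.
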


Consider the following assumption.

\begin{ass}
Let $\mathcal{I}\subset\mathbb{R}[x]$ be zero dimensional and let $s\in\mathbb{R}[x]$ be a polynomial 
separating with respect to $\bm{V}_{\mathbb{C}}(\mathcal{I})\subset\mathbb{C}^n$.
Let $h$ be the following polynomial in $\mathbb{R}[x,t]$:
\begin{equation*}
h(x,t) = t-s(x),
\end{equation*}
where $t$ is an auxiliary variable.
\label{ass:basic}
\end{ass}

The following two lemmas characterize the ideal $\mathcal{I}_t=\mathcal{I}\cup\langle h \rangle$
and the elimination ideal $\mathcal{I}_t^n:=\mathcal{I}_t\cap\mathbb{R}[t]$.

\begin{lem}
Let Assumption~\ref{ass:basic} hold.
Let $\mathcal{C}=\{c_1,\dots,c_\ell\}$ be any basis of the ideal $\mathcal{I}$. The ideal
$\mathcal{I}_t=\langle c_1,\dots,c_\ell,h \rangle\subset\mathbb{R}[x,t]$
is zero dimensional.
\label{lem:zeroDim}
\end{lem}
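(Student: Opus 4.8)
The plan is to use the equivalence of conditions in Theorem~\ref{thm:finiteness}: to show $\mathcal{I}_t$ is zero dimensional it suffices to show that the affine variety $\bm{V}(\mathcal{I}_t)$ — more precisely its complex counterpart, which by the same theorem applied over $\mathbb{C}$ controls finiteness of the quotient algebra — is a finite set, or alternatively that for each of the $n+1$ variables $x_1,\dots,x_n,t$ some pure power occurs as a leading term in a Groebner basis. I would argue via the variety. First I would observe that $\bm{V}^t_{\mathbb{C}}(\mathcal{I}_t) = \{(\bar x, s(\bar x)) : \bar x \in \bm{V}_{\mathbb{C}}(\mathcal{I})\}$: indeed a point $(\bar x,\bar t)$ lies in $\bm{V}^t_{\mathbb{C}}(\mathcal{I}_t)$ iff $\bar x$ annihilates all of $c_1,\dots,c_\ell$ — equivalently all of $\mathcal{I}$, since the $c_i$ generate $\mathcal{I}$ — and simultaneously $h(\bar x,\bar t)=\bar t - s(\bar x)=0$, i.e. $\bar t = s(\bar x)$.

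Next, since $\mathcal{I}$ is zero dimensional, Theorem~\ref{thm:finiteness} (condition 2, read over the algebraic closure) gives that $\bm{V}_{\mathbb{C}}(\mathcal{I})$ is a finite set. The map $\bar x \mapsto (\bar x, s(\bar x))$ is a bijection from $\bm{V}_{\mathbb{C}}(\mathcal{I})$ onto $\bm{V}^t_{\mathbb{C}}(\mathcal{I}_t)$ (its inverse is the projection $(\bar x,\bar t)\mapsto \bar x$, which is well defined since $\bar t$ is determined by $\bar x$). Hence $\bm{V}^t_{\mathbb{C}}(\mathcal{I}_t)$ is finite. Then I would invoke Theorem~\ref{thm:finiteness} in the other direction, for the ideal $\mathcal{I}_t \subset \mathbb{R}[x,t]$ in the $n+1$ variables $x_1,\dots,x_n,t$: finiteness of its variety forces the algebra $\mathbb{R}[x,t]/\mathcal{I}_t$ to be finite dimensional, which is precisely the statement that $\mathcal{I}_t$ is zero dimensional.

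The only subtlety — and the step I expect to need the most care — is the interface between the real and complex pictures: Theorem~\ref{thm:finiteness} as stated refers to $\bm{V}(\mathcal{I})\subset\mathbb{R}^n$, but the equivalence ``finite variety $\Leftrightarrow$ finite-dimensional quotient'' is only true over an algebraically closed field, so one should apply it to $\bm{V}_{\mathbb{C}}$. The cleanest way to handle this is to note that the dimension of $\mathbb{R}[x]/\mathcal{I}$ over $\mathbb{R}$ equals the dimension of $\mathbb{C}[x]/(\mathcal{I}\,\mathbb{C}[x])$ over $\mathbb{C}$ (a Groebner basis with respect to a fixed monomial ordering, hence the monomial basis $\mathcal{B}$ of the quotient, is the same whether computed over $\mathbb{R}$ or over $\mathbb{C}$), so zero-dimensionality of $\mathcal{I}$ over $\mathbb{R}$ is equivalent to finiteness of $\bm{V}_{\mathbb{C}}(\mathcal{I})$, and likewise for $\mathcal{I}_t$; the separating hypothesis on $s$ is not actually needed for this lemma and only $s \in \mathbb{R}[x]$ being a genuine polynomial is used. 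With that bookkeeping in place the argument is immediate.
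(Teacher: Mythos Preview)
Your proof is correct and follows essentially the same route as the paper: describe $\bm{V}^t_{\mathbb{C}}(\mathcal{I}_t)$ as the graph $\{(\bar x,s(\bar x)):\bar x\in\bm{V}_{\mathbb{C}}(\mathcal{I})\}$, observe this is finite because $\mathcal{I}$ is zero dimensional, and invoke Theorem~\ref{thm:finiteness}. Your observation that the separating hypothesis on $s$ is superfluous here is also correct --- the paper's proof cites it, but finiteness of $\bm{V}^t_{\mathbb{C}}(\mathcal{I}_t)$ follows solely from $\bm{V}_{\mathbb{C}}(\mathcal{I})$ being finite and $t$ being uniquely determined by $x$ through $h$; the separating property only becomes relevant in Lemma~\ref{lem:polynomialGRB} and Theorem~\ref{thm:PUR}.
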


\begin{proof}
Consider the ideal $\mathcal{I}_t$. By 
\cite{cox1992ideals}, one has that 
$\mathcal{I}_t=\mathcal{I}\cup\langle h \rangle$. Hence, since
$\bm{V}^t_{\mathbb{C}}(\mathcal{I}_t)=\bm{V}^t_{\mathbb{C}}(\mathcal{I})\cap
\bm{V}^t_{\mathbb{C}}( \langle h \rangle )$ \cite{cox1992ideals} and since $s$ is separating
with respect to $\bm{V}_{\mathbb{C}}(\mathcal{I})$, one has that 
$\bm{V}^t_{\mathbb{C}}(\mathcal{I}_t)$ is a finite set. 
Thus, by Theorem~\ref{thm:finiteness}, $\mathcal{I}_t$ is zero dimensional.
\end{proof}

\begin{lem}
\label{lem:polynomialGRB}
Let Assumption~\ref{ass:basic} hold. Let $\mathcal{G}_t$ be a Groebner basis of
the ideal $\mathcal{I}_t=\mathcal{I}\cup\langle h \rangle$, with respect to any lex ordering, 
with $x_i>_lt$, for $i=1,\dots,n$.
There exists a polynomial $\eta\in\mathcal{G}_t\cap\mathbb{R}[t]$ different from the zero polynomial,
and the roots in $t$ of the polynomial $\eta$ are in 
\begin{equation*}
\mathcal{T}:=\{t\in\mathbb{C}:\;t=f(x),\,\forall x\in\bm{V}_{\mathbb{C}}(\mathcal{I})   \}.
\end{equation*}
\end{lem}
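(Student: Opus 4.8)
The plan is to combine three facts already in place: Lemma~\ref{lem:zeroDim} (so that $\mathcal{I}_t$ is zero dimensional), Theorem~\ref{thm:finiteness} (to manufacture the univariate polynomial $\eta$), and Theorem~\ref{thm:closure} (to pin down where its roots sit). Throughout, the polynomial $f$ appearing in the definition of $\mathcal{T}$ is the separating polynomial $s$ of Assumption~\ref{ass:basic}, so that $h(x,t)=t-s(x)$; the separating hypothesis itself has already been spent inside Lemma~\ref{lem:zeroDim} and is not needed again.

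First I would produce the nonzero $\eta\in\mathcal{G}_t\cap\mathbb{R}[t]$. By Lemma~\ref{lem:zeroDim}, the ideal $\mathcal{I}_t=\langle c_1,\dots,c_\ell,h\rangle$ is zero dimensional, so condition~3 of Theorem~\ref{thm:finiteness} applies to $\mathcal{G}_t$; applied to the variable $t$, it yields some $g\in\mathcal{G}_t$ with $\mathrm{LT}(g)=t^{m}$ for an $m\geq 0$. Since the lex ordering satisfies $x_i>_l t$ for every $i$, every monomial $x^\alpha t^\beta$ with $\alpha\neq 0$ is $>_l t^{m}$, so no such monomial can occur in $g$; hence $g\in\mathbb{R}[t]$, and $g\neq 0$ since it has a leading term. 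In particular the elimination ideal $\mathcal{I}_t^n=\mathcal{I}_t\cap\mathbb{R}[t]$ is nonzero, so, $\mathbb{R}[t]$ being a principal ideal domain, $\mathcal{I}_t^n=\langle\eta\rangle$ for a monic $\eta$ whose degree is least among the nonzero elements of $\mathcal{I}_t^n$. By the Elimination Theorem, $\mathcal{G}_t\cap\mathbb{R}[t]$ is a Groebner basis of $\mathcal{I}_t^n$, and a Groebner basis of $\langle\eta\rangle$ in the single variable $t$ must contain a scalar multiple of $\eta$: its leading terms have to generate $\langle t^{\deg\eta}\rangle$, and any element of $\langle\eta\rangle$ with leading term $t^{\deg\eta}$ equals $\eta$ up to a constant. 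Thus $\eta$, suitably normalized, lies in $\mathcal{G}_t\cap\mathbb{R}[t]$, which proves the existence assertion.

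For the roots, recall from the proof of Lemma~\ref{lem:zeroDim} that $\bm{V}^t_{\mathbb{C}}(\mathcal{I}_t)=\bm{V}^t_{\mathbb{C}}(\mathcal{I})\cap\bm{V}^t_{\mathbb{C}}(\langle h\rangle)=\{(\bar x,s(\bar x)):\bar x\in\bm{V}_{\mathbb{C}}(\mathcal{I})\}$, so that $\pi_t(\bm{V}^t_{\mathbb{C}}(\mathcal{I}_t))=\{s(\bar x):\bar x\in\bm{V}_{\mathbb{C}}(\mathcal{I})\}=\mathcal{T}$. On the other hand, since $\mathcal{I}_t$ is zero dimensional, Theorem~\ref{thm:closure} gives $\pi_t(\bm{V}^t_{\mathbb{C}}(\mathcal{I}_t))=\{t\in\mathbb{C}:g(t)=0,\ \forall g\in\mathcal{I}_t^n\}$, and because $\mathcal{I}_t^n=\langle\eta\rangle$ this latter set is exactly $\{t\in\mathbb{C}:\eta(t)=0\}$. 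Equating the two descriptions gives $\{t\in\mathbb{C}:\eta(t)=0\}=\mathcal{T}$, which in particular yields the inclusion claimed in the lemma. The step I expect to be the main obstacle is exactly this identification: it is crucial that $\eta$ be taken to be the generator of $\mathcal{I}_t^n$ (the nonzero element of $\mathcal{G}_t\cap\mathbb{R}[t]$ of least degree), for any element of $\mathcal{G}_t\cap\mathbb{R}[t]$ that is not a scalar multiple of $\eta$ is a proper multiple of it and may acquire roots outside $\mathcal{T}$; and it is equally crucial to appeal to Theorem~\ref{thm:closure} rather than to a plain projection argument, since for a general ideal one only has $\pi_t(\bm{V}^t_{\mathbb{C}}(\mathcal{I}_t))\subseteq\bm{V}_{\mathbb{C}}(\mathcal{I}_t^n)$, and it is the fact that $\mathcal{I}_t$ is zero dimensional (Lemma~\ref{lem:zeroDim}) that upgrades this to the equality needed to rule out extraneous roots.
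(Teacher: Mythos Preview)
Your proof is correct and follows essentially the same route as the paper: zero-dimensionality of $\mathcal{I}_t$ via Lemma~\ref{lem:zeroDim}, existence of a nonzero $\eta\in\mathcal{G}_t\cap\mathbb{R}[t]$ via Theorem~\ref{thm:finiteness} together with the Elimination Theorem, and identification of the root set with $\mathcal{T}$ via Theorem~\ref{thm:closure}. Your version is in fact more careful than the paper's on two points it leaves implicit: you spell out why the element of $\mathcal{G}_t$ with leading term $t^m$ must lie in $\mathbb{R}[t]$ under the chosen lex order, and you correctly insist that $\eta$ be taken as the generator of the principal ideal $\mathcal{I}_t^n$ so that its zero set is exactly $\bm{V}_{\mathbb{C}}(\mathcal{I}_t^n)$ rather than a possibly larger set.
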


\begin{proof}
By Lemma~\ref{lem:zeroDim}, the ideal $\mathcal{I}_t$ is zero dimensional. 
Hence, by Theorem~\ref{thm:finiteness}, one has that there exists a polynomial $\eta$
different from the zero polynomial, such that $\eta\in\mathcal{G}_t\cap\mathbb{R}[t]$,
and, by the Elimination Theorem \cite{cox1992ideals}, one has that $\eta$ is a Groebner
basis of $\mathcal{I}_t^n:=\mathcal{I}_t\cap\mathbb{R}[t]$. 
Thus, by considering that $\pi_t(\bm{V}^t_{\mathbb{C}}(\mathcal{I}_t))=
\pi_t(\bm{V}^t_{\mathbb{C}}(\mathcal{I})\cap\bm{V}^t_{\mathbb{C}}( \langle h \rangle ))=
\mathcal{T}$, by Theorem~\ref{thm:closure}, one has that
$\mathcal{T} = \{t\in\mathbb{C}:\;g(t)=0,\,\forall g \in\langle \eta \rangle \}$.
\end{proof}

Let the lex ordering, with $x_1>_lx_2>_l\cdots>_lx_n>_lt$, be fixed.
A \emph{real Polynomial Univariate Representation} (briefly, \emph{PUR})
\emph{of the ideal $\mathcal{I}$} is
a $(n+1)$--tuple 
\begin{equation*}
\{\eta(t),\,g_n(x_n,t),g_{n-1}(x_{n-1},t),\dots,g_2(x_2,t),\,g_1(x_1,t)     \},
\end{equation*}
which is such that $\eta\in\mathbb{R}[t]$, $g_i\in\mathbb{R}[x_i,t]$,
$\mathrm{LT}(g_i)=x_i$, for $i=1,\dots,n$, and, letting $\mathcal{T}_\mathbb{R}$
be the set of all the real solutions to $\eta(t)=0$ in $t$, 
\begin{equation*}
\bm{V}(\mathcal{I})=\{
x\in\mathbb{R}^n:\;g_i(x_i,t)=0,\,\forall t\in\mathcal{T}_{\mathbb{R}},\,i=1,\dots,n
\}.
\end{equation*}

The following theorem gives
a constructive method to compute the real PUR of a given ideal $\mathcal{I}$.

\begin{thm}
Let Assumption~\ref{ass:basic} hold. 
Let the lex ordering, with $x_1>_lx_2>_l\cdots>_lx_n>_lt$, be fixed.
The Groebner basis $\mathcal{G}_t$
of the ideal $\mathcal{I}_t=\mathcal{I}\cup\langle h \rangle$ is a real  PUR
of $\mathcal{I}$. 
\label{thm:PUR}
\end{thm}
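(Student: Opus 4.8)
The plan is to show that the Groebner basis $\mathcal{G}_t$ of $\mathcal{I}_t$, computed with respect to the lex ordering $x_1 >_l x_2 >_l \cdots >_l x_n >_l t$, has exactly the structural shape demanded by the definition of a real PUR, and that the associated variety reconstruction formula holds. First I would invoke Lemma~\ref{lem:zeroDim} to conclude $\mathcal{I}_t$ is zero dimensional, and then Lemma~\ref{lem:polynomialGRB} to extract from $\mathcal{G}_t$ a nonzero $\eta \in \mathcal{G}_t \cap \mathbb{R}[t]$ which generates the elimination ideal $\mathcal{I}_t^n$; Lemma~\ref{lem:polynomialGRB} also gives that the roots of $\eta$ are exactly $\mathcal{T} = \{s(x) : x \in \bm{V}_{\mathbb{C}}(\mathcal{I})\}$, so in particular the real roots $\mathcal{T}_{\mathbb{R}}$ of $\eta$ are $\{s(x) : x \in \bm{V}(\mathcal{I})\}$ together with any real values attained by $s$ at complex points — and here the separating property of $s$ is what makes the correspondence $t \leftrightarrow x$ one-to-one.

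Next I would address the shape of the remaining generators. Since $\mathcal{I}_t$ is zero dimensional, Theorem~\ref{thm:finiteness}(3) guarantees that for each $i \in \{1,\dots,n\}$ some power $x_i^{m_i}$ is the leading term of an element of $\mathcal{G}_t$. The key claim is that in fact $\mathcal{G}_t$ contains, for each $i$, an element $g_i$ with $\mathrm{LT}(g_i) = x_i$ (i.e. $m_i = 1$) and $g_i \in \mathbb{R}[x_i, t]$. The reason is the defining relation $h = t - s(x)$: modulo $\mathcal{I}_t$, every variable $x_i$ is algebraic over $\mathbb{R}[t]$ of degree equal to the number of distinct values of $x_i$ on $\bm{V}_{\mathbb{C}}(\mathcal{I})$-fibers, but because $s$ separates $\bm{V}_{\mathbb{C}}(\mathcal{I})$, the value $t = s(x)$ determines $x$ uniquely, hence determines each $x_i$ as a function of $t$. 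Concretely, one shows $x_i - r_i(t) \in \mathcal{I}_t$ for a suitable rational (in fact, after clearing, polynomial upon reduction) expression; arguing through the quotient algebra $\mathfrak{A}_t = \mathbb{R}[x,t]/\mathcal{I}_t$, the map $m_t^{\mathfrak{A}_t}$ acts on a basis whose size equals $\dim_{\mathbb{R}} \mathfrak{A} = \#\bm{V}_{\mathbb{C}}(\mathcal{I})$ (counted with multiplicity), and minimality of the lex Groebner basis then forces the leading terms below $\eta$ to be precisely $x_1, \dots, x_n$. This yields $g_i \in \mathcal{G}_t$ with $\mathrm{LT}(g_i) = x_i$; that $g_i$ involves only $x_i$ and $t$ (not $x_{i+1},\dots,x_n$) follows from the Elimination Theorem applied to successive elimination ideals $\mathcal{I}_t \cap \mathbb{R}[x_i, x_{i+1}, \dots, x_n, t]$, since the reduced form with leading term $x_i$ cannot contain any $x_j$ with $j > i$ (those would be larger were they present with positive degree only if... ) — more carefully, any monomial in $g_i$ other than the leading $x_i$ must be strictly smaller in lex and lie outside $\langle \mathrm{LT}(\mathcal{I}_t)\rangle$, and since $x_j$ for $j > i$ already appears as a leading term $x_j = \mathrm{LT}(g_j)$, no monomial divisible by such $x_j$ survives reduction, leaving only powers of $t$.

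Finally I would verify the reconstruction identity $\bm{V}(\mathcal{I}) = \{x \in \mathbb{R}^n : g_i(x_i, t) = 0,\ \forall t \in \mathcal{T}_{\mathbb{R}},\ i = 1,\dots,n\}$. For the inclusion $\subseteq$: if $\bar{x} \in \bm{V}(\mathcal{I})$, set $\bar{t} = s(\bar{x}) \in \mathcal{T}_{\mathbb{R}}$; then $(\bar{x}, \bar{t}) \in \bm{V}^t(\mathcal{I}_t)$, so every $g_i$ vanishes there, and since $\mathrm{LT}(g_i) = x_i$ the equation $g_i(x_i, \bar t) = 0$ is linear in $x_i$ hence pins $\bar{x}_i$ uniquely from $\bar t$; one must also check $g_i(\bar x_i, t) = 0$ for the \emph{other} real roots $t \in \mathcal{T}_{\mathbb{R}}$ — but this is where the separating hypothesis is used again: distinct real roots correspond to distinct points of $\bm{V}(\mathcal{I})$, and the quantifier in the PUR definition should be read as "$g_i(x_i,t)=0$ has, ranging over $t \in \mathcal{T}_{\mathbb{R}}$, solution set whose $x_i$-entries are exactly the $x_i$-coordinates of $\bm{V}(\mathcal{I})$," matching the literal set-builder. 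For $\supseteq$: given $x$ satisfying all the $g_i$-equations at some $t \in \mathcal{T}_{\mathbb{R}}$, the pair $(x,t)$ satisfies enough of a generating set of $\mathcal{I}_t$ — together with $\eta(t) = 0$ — to force $(x,t) \in \bm{V}^t(\mathcal{I}_t)$, hence $x \in \bm{V}(\mathcal{I})$; the point is that $\{\eta, g_1, \dots, g_n\}$ (a subset of $\mathcal{G}_t$ singled out by having leading terms $t^{\deg \eta}, x_1, \dots, x_n$) already has the same variety as the full basis, because any other element of $\mathcal{G}_t$ reduces to zero against these via the division algorithm once $\eta$ and the $g_i$ are in hand. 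I expect the main obstacle to be the second paragraph: rigorously establishing that the leading terms of $\mathcal{G}_t$ below $\eta$ are exactly the \emph{linear} monomials $x_1, \dots, x_n$ — this is precisely the content of "$s$ separating $\Rightarrow$ each $x_i$ is a polynomial function of $t$ on the variety", and it is the step that distinguishes a genuine PUR from a mere triangular Groebner basis; it needs the dimension count $\dim_{\mathbb{R}} \mathbb{R}[x,t]/\mathcal{I}_t = \dim_{\mathbb{R}} \mathbb{R}[x]/\mathcal{I}$ and an argument that the reduced Groebner basis cannot be "wasteful."
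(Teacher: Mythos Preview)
Your plan is workable but takes a more structural route than the paper. The paper's proof is shorter and hinges on the Lagrange Interpolation Formula: once Lemma~\ref{lem:polynomialGRB} supplies $\eta$, the paper observes that, because $s$ is separating, the finitely many points of $\bm{V}^t(\mathcal{I}_t)$ have pairwise distinct $t$-coordinates, so for each $i$ a univariate polynomial $\varrho_i \in \mathbb{R}[t]$ can be chosen to interpolate the $x_i$-coordinates at those $t$-values. This immediately yields $w_i := x_i - \varrho_i(t)$ vanishing on the variety, hence (the paper asserts) $w_i \in \mathcal{I}_t$, and $\{\eta, w_n, \dots, w_1\}$ is then declared to be a Groebner basis of $\mathcal{I}_t$. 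The interpolation step sidesteps exactly what you flag as your main obstacle --- the dimension count and the argument that the reduced leading terms must be the linear monomials $x_1,\dots,x_n$ --- by \emph{exhibiting} the linear-in-$x_i$ generators directly rather than deducing their shape from Groebner-basis structure theory. Your approach is more scrupulous about why the resulting set is genuinely a Groebner basis and why the reconstruction identity holds (the paper dispatches both with ``by construction''); the paper's approach is quicker but leans on the reader to accept that vanishing on the variety implies ideal membership, and that the exhibited set really is the Groebner basis under the stated lex order.
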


\begin{proof}
By Lemma~\ref{lem:polynomialGRB}, one has that there exists an $\eta
\in\mathcal{G}_t\cap\mathbb{R}[t]$ different from the zero polynomial, whose root
are in the set $\mathcal{T}=\{t\in\mathbb{C}:\;t=f(x),\,\forall x\in\bm{V}_{\mathbb{C}}(\mathcal{I})   \}$.
Let $\mathcal{T}_{\mathbb{R}}=\mathcal{T}\cap\mathbb{R}$ be the set of the real roots of $\eta$.
Let $\pi_{x_i}^{\mathbb{R}}$ denote the \emph{projection map}, $\pi_{x_i}^{\mathbb{R}}:\mathbb{R}^n\times \mathbb{R}\rightarrow
\mathbb{R}$, which maps each couple $(\bar{x},\,\bar{t})$ in $\bar{x}_i$, for $i=1,\dots,n$.
By the Lagrange Interpolation Formula \cite{sauer1995multivariate}, there exists a polynomial
$\varrho_i(t)\in\mathbb{R}[t]$, which is such that $\pi_{x_i}^\mathbb{R}((\bar{x},\bar{t}))=
\varrho_i(\bar{t})$, for all the couples $(\bar{x},\bar{t})\in\bm{V}^t(\mathcal{I}_t)$, for $i=1,\dots,n$. 
Hence, in the ideal $\mathcal{I}_t$
there exists polynomials $w_i=x_i-\varrho_i(t)$, for $i=1,\dots,n$, and, by the definition of a Groebner basis,
one has that a Groebner basis of $\mathcal{I}_t$ is $\{\eta,w_n,w_{n-1},\dots,w_1 \}$ and, 
by construction, this is a real PUR of the ideal $\mathcal{I}$.
\end{proof}

By Theorem~\ref{thm:PUR}, Algorithm~\ref{alg:solutionPUR} is able to compute the real
solutions to a system of equations.

\begin{algorithm}[htb]
\begin{algorithmic}[1]
\REQUIRE A zero dimensional ideal $\mathcal{I}$ in $\mathbb{R}[x]$.
\ENSURE The points in $\bm{V}(\mathcal{I})\subset\mathbb{R}^n$. 
\STATE Define a random polynomial $s(x)\in\mathbb{R}[x]$ and  the 
	polynomial $h(x,t)=t-s(x)$\label{step:randPol}.
\STATE Letting $\{c_1,\dots,c_\ell\}$ be a set of generators of the ideal $\mathcal{I}$,
	define the ideal $\mathcal{I}_t=\langle c_1,\dots,c_\ell,h \rangle$.
\STATE Compute a Groebner basis $\mathcal{G}_t$ of the ideal $\mathcal{I}_t$,
	according to lex ordering, with $x_1>_lx_2>_l\cdots>_lx_n>_lt$.
\STATE Verify that $\mathcal{G}_t$ is a real PUR, otherwise return to step~\ref{step:randPol}.
\STATE Let $\mathcal{G}_t=\{\eta(t),\,g_n(x_n,t),\dots,\,g_1(x_1,t) \}$, where
	$g_i=x_i-\varrho_i(t)$, with $\varrho_i\in\mathbb{R}[t]$, for $i=1,\dots,n$.
\STATE Find the number $d$ of real solutions of $\eta(t)=0$.
\STATE \label{step:solpol}Compute the set $\mathcal{T}_{\mathbb{R}}$ of the $d$ real solutions of $\eta(t)=0$.
\STATE Define $\mathcal{S}=\{[\begin{array}{ccc}
\varrho_1(t) & \cdots & \varrho_n(t)
\end{array}]^\top,\;\forall t \in\mathcal{T}_{\mathbb{R}}  \}$.
\RETURN $\mathcal{S}=\bm{V}(\mathcal{I})$.
\end{algorithmic}
\caption{Solution of polynomial equations through PUR.\label{alg:solutionPUR}}
\end{algorithm}

\begin{rem}
It can be easily proved that, if one define a random polynomial $s(x)$, there is a probability of 1
that it is separating (i.e., there exist isolated monomial coefficients which makes the polynomial
$s$ be not separating). Hence, the iterations required by Algorithm~\ref{alg:solutionPUR}
are finite.
\end{rem}

\begin{rem}
By \cite{alonso1996zeros}, one has that the numerical computation of roots of the polynomial $\eta$, obtained by using 
Algorithm~\ref{alg:solutionPUR} is, generally, numerically more complex than the computation of the roots of
the polynomial $\chi_f$, obtained by using Algorithm~\ref{alg:solutionRUR}.
However, since the representation of $\bm{V}(\mathcal{I})$ obtained by using Algorithm~\ref{alg:solutionPUR}
is polynomial, 
it can be preferred to the rational one obtained by using Algorithm~\ref{alg:solutionRUR}.
\end{rem}

\begin{rem}
\label{rem:param}
Note that the computations required by Algorithm~\ref{alg:solutionMatrix} and Algorithm~\ref{alg:solutionPUR} 
can be carried out also when some (or, possibly, all the) coefficients of the polynomials $p_1,\dots,p_s$ are functions of 
some parameters.
However, even if the computation of the matrices $M_{x_i}^\mathfrak{A}$ is generally faster then
the computation of the Groebner basis $\mathcal{G}_t$, since the computations which have to be carried out
at Step~\ref{step:sol} of Algorithm~\ref{alg:solutionMatrix} can be very computationally expensive, in many 
cases of practical interest, Algorithm~\ref{alg:solutionPUR} may be preferred, because the
computations needed to solve Step~\ref{step:solpol} of Algorithm~\ref{alg:solutionPUR} 
can be carried out, generally, in a faster way.
\end{rem}

\section{Solution of systems of polynomial inequalities \label{sec:ineq}}

In this section, a procedure to compute, if any,  a solution of a system of polynomial
inequalities is given. This method, based on penalizing variables and reported, e.g., in \cite{anderson1977output},
is connected to the solution of a set 
of polynomial equations, which can be computed with the algorithms
of Section~\ref{sec:solPol}.

Consider the following problem.
\begin{problem}
Let $x=[\begin{array}{ccc}
x_1 & \cdots & x_n
\end{array}]^\top$.
Let the set of polynomials $\mathcal{P}=\{p_1,\dots,p_s\}\subset\mathbb{R}[x]$ be given. 
\begin{enumerate}[(a)]
\item Find, if any, a point $\bar{x}\in\bm{W}(p_1,\dots,p_s)$.\label{prob:a}
\item Let $\{p_1,\dots,p_\ell\}\subseteq\mathcal{P}$. Find, if any, a point 
	$\bar{x}\in\bm{W}(p_1,\dots,p_s)$, $\bar{x}\notin\bm{V}(p_1,\dots,p_\ell)$.\label{prob:b}
\end{enumerate}
\label{prob:dis}
\end{problem}

Note that a solution $\bar{x}$ to Problem~\ref{prob:dis}~\eqref{prob:a} is a solution of
the system of polynomial inequalities $p_i(x)\geq 0$, for $i=1,\dots,s$, whereas, a
solution $\bar{x}$ to  Problem~\ref{prob:dis}~\eqref{prob:b} is a solution of the system of
polynomial inequalities $p_i(x)>0$, for $i=1,\dots,\ell$, and $p_i(x)\geq 0$,
for $i=\ell+1,\dots,s$.

Let $v=[\begin{array}{ccc}
	v_1 & \cdots & v_s
	\end{array}]^\top\in\mathbb{R}^s$ and let
$w=[\begin{array}{ccc}
w_1 & \cdots & w_s
\end{array}]^\top\in\mathbb{R}^s$ be auxiliary variables.
By \cite{anderson1977output}, Problem~\ref{prob:dis}~\eqref{prob:a} can be solved 
with the following procedure:
\begin{enumerate}
\item Let $\alpha_i,\,\beta_i$, for $i=1,\dots,n$, and $\gamma_k,\,\delta_k$, for $k=1,\dots,s$,
	be fixed random real numbers.
\item Define the polynomial function $J\in\mathbb{R}[x,w]$:
	\begin{equation}
	J=\sum_{i=1}^{n} \alpha_i(x_i-\beta_i)^2+\sum_{k=1}^s\gamma_k(w_k-\delta_k)^2.
	\label{eq:J}
	\end{equation}
\item Define the polynomial function $H\in\mathbb{R}[x,v,w]$\label{step:Hdef}
	\begin{equation*}
	 H=J+\sum_{k=1}^{s}v_k(p_k-w_k^2).
	\end{equation*}
\item Solve the following
	polynomial system of equations \label{step:solve}
	\begin{subequations}
	\begin{eqnarray}
	\frac{\partial H(x,v,w)}{\partial x} & = & 0,\\
	\frac{\partial H(x,v,w)}{\partial v} & = & 0,\\
	\frac{\partial H(x,v,w)}{\partial w} & = & 0,
	\end{eqnarray}
	\label{eq:Astat}%
	\end{subequations}
	in $[\begin{array}{ccc}
	x^\top & v^\top & w^\top
	\end{array}]^\top\in\mathbb{R}^{n+2s}$.
	
\item Let $\pi_x:\mathbb{R}^{n+2s}\rightarrow\mathbb{R}^n$ be the map which maps each
	vector $[\begin{array}{ccc}
	x^\top & v^\top & w^\top
	\end{array}]^\top\in\mathbb{R}^{n+2s}$ in $x\in\mathbb{R}^n$ and let $\mathcal{S}$ be
	the set of the solutions to \eqref{eq:Astat}. A solution to 
	Problem~\ref{prob:dis}~\eqref{prob:a} is given by
	\begin{equation*}
	\{x\in\mathbb{R}^n:\;x=\pi_x(\zeta),\,\forall\zeta\in\mathcal{S}   \}.
	\end{equation*}
\end{enumerate}

On the other hand, always by \cite{anderson1977output}, a solution to Problem~\ref{prob:dis}~\eqref{prob:b}
can be obtained by using the same  procedure used to solve Problem~\ref{prob:dis}~\eqref{prob:a}, 
by changing only step~\ref{step:Hdef}):
\begin{enumerate}
\setcounter{enumi}{2}
\item Define the polynomial function $H\in\mathbb{R}[x,v,w]$
	\begin{equation*}
	 H=J+\sum_{k=1}^{\ell}v_k(w_k^2p_k-1)+\sum_{k=\ell+1}^{s}v_k(p_k-w_k^2).
	\end{equation*}
\end{enumerate}

\begin{rem}
\label{rem:nosolutions}
By \cite{anderson1977output}, one has that there exists a solution to \eqref{eq:Astat}
if and only if there exists a solution to Problem~\ref{prob:dis}.
\end{rem}

\begin{rem}
Note that the solution to Problem~\ref{prob:dis}~\eqref{prob:a} (respectively, Problem~\ref{prob:dis}~\eqref{prob:b})
obtained by using the procedure given in \cite{anderson1977output} corresponds to computing the 
stationary points of  the function $J$
in \eqref{eq:J}, subject to the constraints $p_i(x)=w_i^2$, for $i=1,\dots,s$ 
(respectively, $w_i^2p_i(x)=1$, for $i=1,\dots,\ell$, and $p_j(x)= w_j^2$, for $j=\ell+1,\dots,s$), 
where $w_i^2>0$, for $i=1,\dots,s$. Therefore, since a point
$[\begin{array}{ccc}
\bar{x}^\top & \bar{v}^\top & \bar{w}^\top
\end{array}]^\top \in\mathcal{S}$ is such that $p_i(\bar{x})=\bar{w}_i^2\geq 0$
(respectively, $\bar{w}_i^2p_i(\bar{x})=1$, for $i=1,\dots,\ell$, and $p_j(x)= w_j^2$, for $j=\ell+1,\dots,s$),
one has that $\pi_x([\begin{array}{ccc}
\bar{x}^\top & \bar{v}^\top & \bar{w}^\top
\end{array}])=\bar{x}$ is a solution to Problem~\ref{prob:dis}~\eqref{prob:a} (respectively, Problem~\ref{prob:dis}~\eqref{prob:b}).
\end{rem}

By \cite{anderson1977output} one has that the ideal $\langle \frac{\partial H(x,v,w)}{\partial x},
\frac{\partial H(x,v,w)}{\partial v},
\frac{\partial H(x,v,w)}{\partial w}   \rangle$ is, for almost any choice of 
$\alpha_i,\,\beta_i$,  $\gamma_k,\,\delta_k$, zero dimensional,
Hence, step \ref{step:solve}) of this procedure can be actually carried out by using one of the three procedures
given in Algorithm~\ref{alg:solutionMatrix}, Algorithm~\ref{alg:solutionRUR} or Algorithm~\ref{alg:solutionPUR}
in Section~\ref{sec:solPol}. 

\begin{rem}
Note that, as it is pointed out in Remark~\ref{rem:param}, Algorithm~\ref{alg:solutionMatrix} and
Algorithm~\ref{alg:solutionPUR} can be used also when the coefficients of the polynomials depend on 
some parameters. Hence, Problem~\ref{prob:dis} can be solved also with parametric coefficients
of the polynomials $p_1,\dots,p_s$ . 
Algorithm~\ref{alg:solutionPUR} may be preferred to Algorithm~\ref{alg:solutionMatrix},
because the computations needed to carry out Step~\ref{step:solpol} of the first one may
be faster then the ones needed to carry out Step~\ref{step:sol} of the latter one.
\end{rem}

 
 \begin{example}
 \label{example:inters}
 Let $n=2$, $x=[\begin{array}{cc}
 x_1 & x_2
 \end{array}]^\top$, $s=2$, 
 $p_1(x)= -(16-x_1^2)x_2^2+(-16+x_1^2+8x_2)^2$,
 $p_2(x)=5x_1^2-x_1^4-4x_2^2+x_2^4$.
 Let $v=[\begin{array}{cc}
  v_1 & v_2
  \end{array}]^\top$, $w=[\begin{array}{cc}
   w_1 & w_2
   \end{array}]^\top$.
 The goal is to compute a point $\bar{x}\in\bm{W}(p_1,p_2)\subset\mathbb{R}^2$,

 By using the procedure given in Section~\ref{sec:ineq} one has to solve the following 
 set of polynomial equations:
 \begin{equation}
 q_j(x,v,w)=0,\quad \text{for }j=1,\dots,6,\label{eq:systex1}%
 \end{equation}
 where $q_1 = v_1 (10 x_1-4 x_1^3)+2 v_2 x_1 ((w_2^2-x_2){}^2+2 (-8
    w_2^2+x_1^2+8 (x_2-2)))+2 \alpha _1 (x_1-\beta _1)$, 
$q_2=4 v_1 ((w_1^2-x_2){}^2-2) (x_2-w_1^2)-2 v_2
   ((x_1^2+48) w_2^2-48 x_2-x_1^2 (x_2+8)+128)+2 \alpha
   _2 (x_2-\beta _2)$,
$q_3 =  -x_1^4+5 x_1^2+(w_1^2-x_2)^4-4 (w_1^2-x_2)^2$,
$q_4= (-8 w_2^2+x_1^2+8 (x_2-2))^2+(x_1^2-16)
   (w_2^2-x_2)^2$,
$q_5 = 8 v_1 w_1 (2-(w_1^2-x_2){}^2) (x_2-w_1^2)+2 \gamma _1
   (w_1-\delta _1)$,
$q_6 =  4 v_2 w_2 ((x_1^2+48) w_2^2-48 x_2-x_1^2
   (x_2+8)+128)+2 \gamma _2 (w_2-\delta _2) $ and 
$\alpha_1,\,\alpha_2,\,\beta_1,\,\beta_2,\,\gamma_1,\,\gamma_2,\,
\delta_1,\,\delta_2$ are random values.
  
 \begin{figure}[htb]
         \centering
 %
 %
 %
 %
         \includegraphics[width=0.5\textwidth]{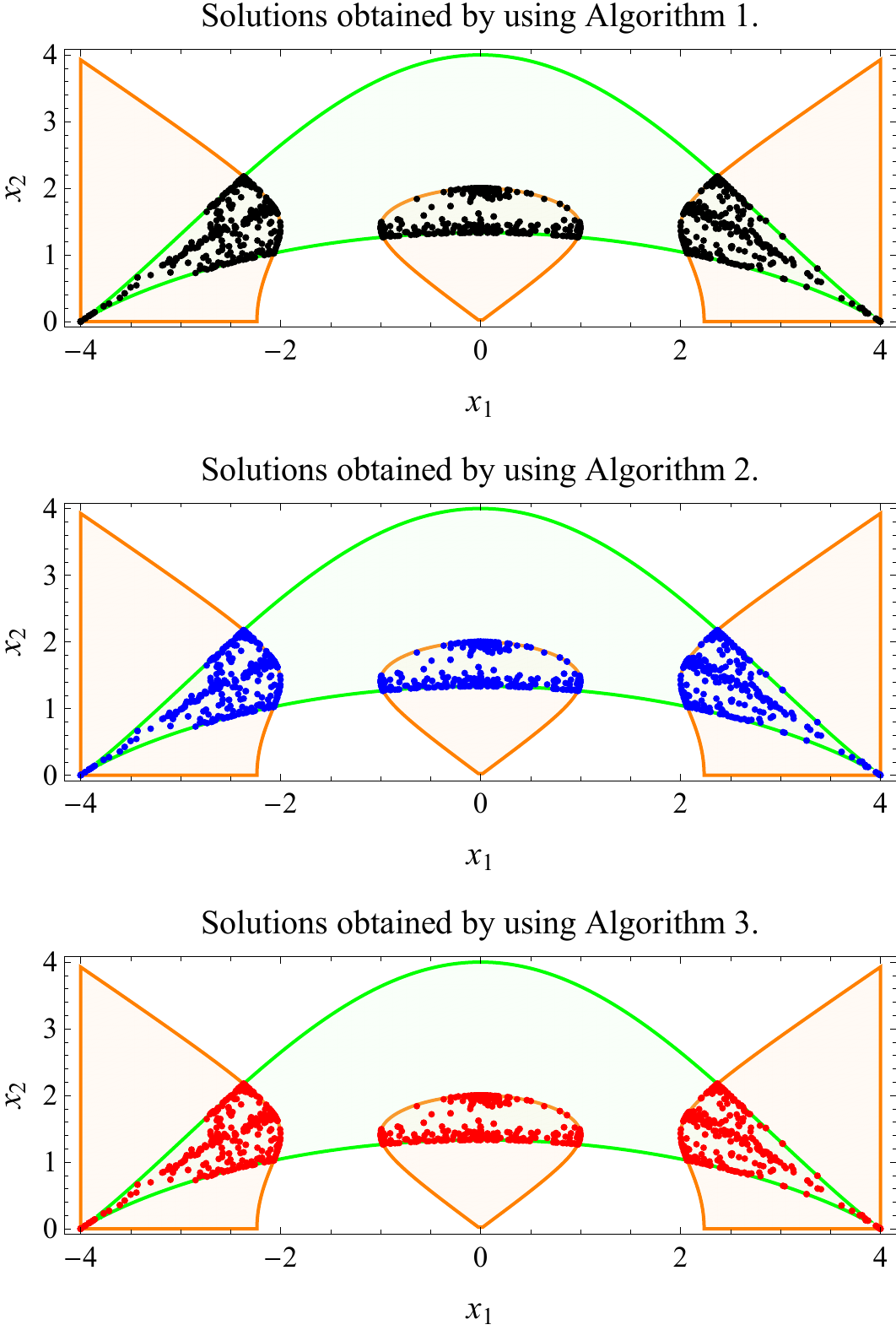}
                 
         \caption{Solutions to the problem of Example~\ref{example:inters}, for 100 different values of the random constants.\label{fig:exampletoy}}
 \end{figure}
 
 The three algorithms given in Section~\ref{sec:solPol} have been used to
  solve such a problem, with the same random choices of the coefficients $\alpha_1,\,\alpha_2,\,\beta_1,\,
  \beta_2,\,\gamma_1,\,\gamma_2,\,\delta_1\text{ and }\delta_2$.
  Figure~\ref{fig:exampletoy} shows the results obtained by using these algorithms, for 100
  different choices of the random parameters $\alpha_1,\,\alpha_2,\,\beta_1,\,
   \beta_2,\,\gamma_1,\,\gamma_2,\,\delta_1\text{ and }\delta_2$.
   As such a figure shows, these procedures are able to
   solve a set of inequalities. 
 \end{example}

\section{Application to a LPV SOF stabilization problem\label{sec:LPVSOF}}

In this section, the techniques given in this paper are used to solve
the Static Output Feedback problem \cite{syrmos1997static}
for a Linear Parametrically--Varying
system \cite{mohammadpour2012control}. 

Consider the following missile model \cite{nichols1993gain,scherer1997parametrically}
\begin{subequations}
\begin{eqnarray}
\dot{\alpha} & = & \kappa_\alpha M((a_n\alpha^2+b_n\alpha+c_n) \alpha + d_n\delta)+q,\\
\dot{q} & = & \kappa_q M((a_m\alpha^2+b_m\alpha+c_m) \alpha + d_m\delta),
\end{eqnarray}
\label{eq:LPV}%
\end{subequations}
where $\alpha$ is the angle of attack, $q$ is the pitch rate, $M$ is the Mach
number of the missile, $\kappa_\alpha$, $\kappa_q$, $a_n$, $b_n$, $c_n$, $d_n$, $a_m$, $b_m$, $c_m$, $d_m$
are known aerodynamic coefficients and $\delta$ is the tail fin deflection, which is
considered as the control input. It is assumed that the only available measure is the
 angle $\alpha$.

 System~\eqref{eq:LPV} can be rewritten as the following LPV system:
\begin{subequations}
\begin{eqnarray}
\dot{\alpha} & = & \theta_1 \alpha +q +\kappa_\alpha M d_n\delta\\
\dot{q} & = & \theta_2 \alpha +\kappa_q M d_m\delta,
\end{eqnarray}
\label{eq:LPVr}%
\end{subequations}
where $\theta_1  =   \kappa_\alpha M(a_n\alpha^2+b_n\alpha+c_n)$ and
$\theta_2  =   \kappa_q M(a_m\alpha^2+b_m\alpha+c_m)$.
Hence, letting $\theta = [\begin{array}{cc}
\theta_1 & \theta_2
\end{array}]^\top$, 
\begin{equation*}
A(\theta) = \left[\begin{array}{cc}
\theta_1 & 1 \\
\theta_2 & 0
\end{array}\right],\;
B = \left[\begin{array}{c}
\kappa_\alpha M d_n\\
\kappa_q M d_m
\end{array}\right],\;
C = [\begin{array}{cc}
1 & 0
\end{array}],
\end{equation*}
$x=[\begin{array}{cc}
\alpha & q
\end{array}]^\top$, 
system~\eqref{eq:LPVr} can be written as
\begin{subequations}
\label{eq:linearized}%
\begin{eqnarray}
\dot{x} & = & A(\theta)x + B u,\\
y & = & Cx.
\end{eqnarray}
\end{subequations}

\begin{problem}
\label{prob:SOFLPV}
Let system~\eqref{eq:linearized} be given and let $\delta = K(\theta)\alpha$, where $K(\theta)$,
is a scalar gain, dependent on the vector $\theta$. Find a $K(\theta)$ and 
$\mathcal{L}\subset\mathbb{R}^2$, 
such that the closed loop system
\begin{equation}
\dot{x} = (A(\theta)+B K(\theta) C) x
\label{eq:closedloop}
\end{equation} 
is exponentially stable, with attraction domain containing $\mathcal{L}$.
\end{problem}

\begin{rem}
Consider that, by construction, the parameters $\theta_1$ and $\theta_2$ are dependent
only on the state $\alpha$. Hence, if  Problem~\ref{prob:SOFLPV} can be solved,
system~\eqref{eq:linearized} can be stabilized by measuring $\alpha$.
\end{rem}

Consider the closed loop dynamic matrix
\begin{equation*}
\tilde{A}(\theta) = A(\theta)+B K(\theta) C. 
\end{equation*}

Let $p_{\tilde{A}}$ be the characteristic polynomial of the matrix $\tilde{A}(\theta)$,
\begin{equation*}
p_{\tilde{A}}(s,\theta)=s^2+p_1(\theta)s+p_2(\theta),
\end{equation*}
where $p_1(\theta)=-K M \kappa _{\alpha } d_n-\theta_1$ and $p_2(\theta)=-K M d_m \kappa _q-\theta_2$.
Let $\Theta$ be a subset of $\mathbb{R}^2$.
One has that, for each fixed ${\theta}\in\Theta$, the eigenvalues of
$A({\theta})$ are complex conjugate if 
\begin{subequations}
\begin{equation}
p_1^2({\theta})-4p_2({\theta})<0,
\end{equation}
and the real parts of such eigenvalues are lower than $-\lambda$ if 
\begin{equation}
p_1({\theta})>2\lambda.\label{eq:eigs}
\end{equation}
\label{eq:conditions}%
\end{subequations}
where $\lambda$ is a fixed real value greater than zero.

By considering that \eqref{eq:conditions} are polynomial inequalities, parametrized in the unknown
vector $\theta$, one has that they can be actually solved by using the procedure given in Section~\ref{sec:ineq}
to transform such a set of inequalities into a set of equalities, and Algorithm~\ref{alg:solutionPUR} 
to compute a solution $K$, depending on $\theta$, of the set of equalities.
By using such a method one obtains the following two polynomials:
\begin{equation*}
\eta(\theta,t), \quad \varrho_K(\theta,t).
\end{equation*}

For each fixed value $\bar{\theta}$, one can obtain the correspondent gain $K(\bar{\theta})$,
which is such that \eqref{eq:conditions} holds, by computing a solution $\bar{t}$ of the equation
$\eta(\bar{\theta},t)=0$ and
setting $K = \varrho_K(\bar{\theta},\bar{t})$. Note that, by Remark~\ref{rem:nosolutions},
if one is not able to find a solution to $\eta(\bar{\theta},t)=0$, then there exists no solution to
\eqref{eq:conditions} for such a value of $\bar{\theta}$ and for such 
a $\lambda$.

In the rest of this section, Assumption~\ref{ass:KKK} is made.
\begin{ass}
\label{ass:KKK}
Let $K(\theta)=\varrho_K(\theta,\bar{t}_\theta)$, where $\bar{t}_\theta$ is such that
\begin{equation*}
\eta(\theta,\bar{t}_\theta)=0.
\end{equation*}
\end{ass}

Consider now system~\eqref{eq:LPV}, with $\delta = K(\theta)\alpha$.
One has that
\begin{subequations}
\begin{eqnarray}
\dot{\theta}_1 & = & \kappa_\alpha M(2a_n\alpha+b_n)\dot{\alpha}\\
\dot{\theta}_2 & = & \kappa_q M(2a_m\alpha+b_m)\dot{\alpha}
\end{eqnarray}
\label{eq:dottheta}
\end{subequations}
where $\dot{\alpha}=\kappa_\alpha M((a_n\alpha^2+b_n\alpha+c_n) \alpha + d_nK(\alpha)\alpha)+q$. Hence, by the definition of the parameters $\theta_1$
and $\theta_2$ and of their time derivatives, one has that there exist polynomial
functions $\phi_1,\,\phi_2,\,\psi_1$ and $\psi_2\in\mathbb{R}[x]$,
which are such that
\begin{equation*}
\begin{array}{rclcrcl}
\theta_1 & = &  \phi_1(x), & \quad &
\theta_2 & = & \phi_2(x),\\
\dot{\theta}_1 & = & \psi_1(x), & &
\dot{\theta}_2 & = & \psi_2(x).
\end{array}
\end{equation*}

\begin{ass}
Let system~\eqref{eq:closedloop} be given, 
let $\mathcal{D}$ be a subset of the state space of system~\eqref{eq:closedloop},
with $0\in\mathcal{D}$, and let
\begin{subequations}
\begin{eqnarray}
\!\!\!\!\!\!\!\!\!\!\mathcal{E} &\!\!\!\! = \!\!\!\! & \{\theta\in\mathbb{R}^2:\exists x \in \mathcal{D}:\theta_1=\phi_1(x),\,
\theta_2=\phi_2(x)\},\label{eq:eee}\\
\!\!\!\!\!\!\!\!\!\!\mathcal{F} &\!\!\!\! = \!\!\!\!&  \{\omega\in\mathbb{R}^2:\exists x \in \mathcal{D}:\omega_1=\psi_1(x),\,
\omega_2=\psi_2(x)\}.\label{eq:fff}
\end{eqnarray}
\end{subequations} 
\begin{enumerate}
\item 
	 \label{point:lip} The matrix $\tilde{A}(\theta)$ is Lipschitz in $\theta$ in $\mathcal{E}$, i.e. $\exists L_A$:
	\begin{equation*}
	\Vert \tilde{A}(\theta)-\tilde{A}(\theta') \Vert < L_A \Vert \theta - \theta' \Vert,\quad\text{for all }\theta,\,\theta'\in\mathcal{E}.
	\end{equation*}
\item \label{point:norm}There exist constants $m\geq 1$ and $\lambda>0$ such that, for
	each fixed $\bar{\theta}\in\mathcal{E}$, any solution of \eqref{eq:closedloop}
	is such that 
	\begin{equation*}
	\Vert x(t) \Vert \leq m e^{-\lambda t} \Vert x(0) \Vert,\quad \forall t\geq 0.
	\end{equation*}
\item \label{point:dotth}Let $L_A$ be the Lipschitz constant of item~\ref{point:lip}) and let
	$m$ and ${\lambda}$ be the constants of item~\ref{point:norm}). One has that
	\begin{equation*}
	\Vert \dot{\theta}  \Vert < \frac{{\lambda}^2}{4 L_A m \log(m)},\quad \text{for all }\dot{\theta}\in\mathcal{F}.
	\end{equation*}
\end{enumerate}
\label{ass:exp}
\end{ass}

The following three propositions show that for system~\eqref{eq:closedloop},
under Assumption~\ref{ass:KKK}, there exists a domain $\mathcal{D}$, with $0\in\mathcal{D}$,
such that items \ref{point:lip}) -- \ref{point:dotth}) of
Assumption~\ref{ass:exp} hold, for some $L_A>0$, $m>0$ and $\lambda>0$.

\begin{prop}
Let Assumption~\ref{ass:KKK} hold. One has that there exists a domain 
$\mathcal{D}_1\subset\mathbb{R}^n$,
with $0\in\mathcal{D}_1$,
 such that the closed loop 
dynamic matrix $\tilde{A}(\bar{\theta})$ is Lipschitz in $\bar{\theta}$ in $\mathcal{E}_1$,
 with $\mathcal{E}_1$ defined as in \eqref{eq:eee}, with $\mathcal{D}$ replaced by $\mathcal{D}_1$.
\label{prop:lipschi}
\end{prop}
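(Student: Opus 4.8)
The plan is to exploit the polynomial structure of $\tilde{A}(\theta)$ together with the local smoothness of the gain $K(\theta)$ guaranteed by Assumption~\ref{ass:KKK}. Writing the closed loop matrix explicitly,
\begin{equation*}
\tilde{A}(\theta)=A(\theta)+BK(\theta)C=\begin{bmatrix} \theta_1+\kappa_\alpha M d_n\,K(\theta) & 1 \\ \theta_2+\kappa_q M d_m\,K(\theta) & 0 \end{bmatrix},
\end{equation*}
one sees that every entry is a polynomial in $\theta_1$, $\theta_2$ and $K(\theta)$. Since a polynomial map is Lipschitz on every bounded subset of its domain, it suffices to produce a bounded set $\mathcal{E}_1\subset\mathbb{R}^2$, containing $\theta^\ast:=(\phi_1(0),\phi_2(0))$, on which $\theta\mapsto K(\theta)$ is Lipschitz, and then to take $\mathcal{D}_1$ to be any bounded neighbourhood of the origin whose image $\{(\phi_1(x),\phi_2(x)):x\in\mathcal{D}_1\}$ under the polynomial (hence continuous) map $x\mapsto(\phi_1(x),\phi_2(x))$ is contained in $\mathcal{E}_1$; such a $\mathcal{D}_1$ exists and contains $0$ because $\phi_1,\phi_2$ are continuous and $\mathcal{E}_1$ is a neighbourhood of $\theta^\ast$.

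To obtain the Lipschitz continuity of $K$ I would use the real PUR of Assumption~\ref{ass:KKK}. Let $t^\ast=\bar{t}_{\theta^\ast}$, so that $\eta(\theta^\ast,t^\ast)=0$. For almost every choice of the random data defining the penalization procedure of Section~\ref{sec:ineq} — hence in the situation of Assumption~\ref{ass:KKK} — the univariate polynomial $\eta(\theta^\ast,\cdot)$ has $t^\ast$ as a simple root, i.e. $\frac{\partial \eta}{\partial t}(\theta^\ast,t^\ast)\neq 0$. The Implicit Function Theorem then yields an open neighbourhood $U$ of $\theta^\ast$ and a $C^1$ (in fact real-analytic) map $U\ni\theta\mapsto\bar{t}_\theta$ with $\eta(\theta,\bar{t}_\theta)=0$; composing with the polynomial $\varrho_K$ shows that $K(\theta)=\varrho_K(\theta,\bar{t}_\theta)$ is $C^1$ on $U$, hence locally Lipschitz. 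Taking $\mathcal{E}_1$ to be a bounded convex neighbourhood of $\theta^\ast$ with $\overline{\mathcal{E}_1}\subset U$ gives a finite Lipschitz constant $L_K:=\sup_{\theta\in\mathcal{E}_1}\Vert\frac{\partial K}{\partial\theta}(\theta)\Vert$ for $K$ on $\mathcal{E}_1$.

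Finally I would assemble a Lipschitz constant for $\tilde{A}$ on $\mathcal{E}_1$: since the entries of $\tilde{A}$ are affine in $(\theta,K)$, for all $\theta,\theta'\in\mathcal{E}_1$
\begin{equation*}
\Vert\tilde{A}(\theta)-\tilde{A}(\theta')\Vert\le\Vert\theta-\theta'\Vert+\big(|\kappa_\alpha M d_n|+|\kappa_q M d_m|\big)\,|K(\theta)-K(\theta')|\le L_A\Vert\theta-\theta'\Vert,
\end{equation*}
with $L_A:=1+(|\kappa_\alpha M d_n|+|\kappa_q M d_m|)L_K$, and choosing $\mathcal{D}_1$ as above makes $\mathcal{E}_1=\{(\phi_1(x),\phi_2(x)):x\in\mathcal{D}_1\}$ a subset of this set, which completes the argument. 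I expect the main obstacle to be the branch-selection issue in the definition of $\bar{t}_\theta$: one must ensure that the root of $\eta(\theta,\cdot)$ singled out by Assumption~\ref{ass:KKK} depends continuously on $\theta$ near $\theta^\ast$. This is precisely where the simple-root (non-vanishing $\partial_t\eta$) genericity is used; were $t^\ast$ a multiple root of $\eta(\theta^\ast,\cdot)$, continuity of the selection — and hence the Lipschitz bound — could fail, and a separate perturbation or stratification argument on the semialgebraic graph of $K$ would be needed.
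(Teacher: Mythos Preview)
Your argument is correct and rests on the same core observation as the paper: $A(\theta)$ is trivially Lipschitz, so everything hinges on $K(\theta)$, and $K(\theta)=\varrho_K(\theta,\bar t_\theta)$ is smooth wherever the chosen root $\bar t_\theta$ of $\eta(\theta,\cdot)$ is simple, i.e.\ wherever $\partial_t\eta(\theta,\bar t_\theta)\neq 0$. The difference lies in how that simple-root condition is secured. You invoke genericity of the random data at the single point $\theta^\ast=(\phi_1(0),\phi_2(0))$ and then apply the Implicit Function Theorem to obtain a local neighbourhood $\mathcal{E}_1$; this is clean and model-independent, but yields only a small $\mathcal{D}_1$ and leaves the genericity claim unproved for this particular construction. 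The paper instead verifies the condition \emph{globally and computationally}: it forms the ideal $\mathcal{J}=\langle \eta,\partial_t\eta,\theta_1-\phi_1(x),\theta_2-\phi_2(x)\rangle$, eliminates $t,\theta$ via a lex Groebner basis, and checks (using the algorithms of Section~\ref{sec:solPol}) that the resulting system in $x$ has no real solutions, so the discriminant locus never meets the parameter curve $\theta=(\phi_1(x),\phi_2(x))$ for any $x\in\mathbb{R}^2$. This buys a stronger conclusion---$K$ is differentiable along the whole curve, and \emph{any} bounded $\mathcal{D}_1\ni 0$ works (with $L_A$ growing as $\mathcal{D}_1$ enlarges)---and ties the proposition back to the algebraic-geometry machinery that is the point of the paper. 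Your explicit assembly of $L_A$ from $L_K$ is a nice touch that the paper omits.
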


\begin{proof}
The proof of this proposition follows directly by the definition of the matrix
${A}(\theta)$ and of the matrix $K(\theta)$. As a matter of fact, the matrix $A(\theta)$
is trivially Lipschitz, whereas, the matrix $K(\theta)$ is obtained by computing the roots
of a polynomial whose coefficients are polynomially dependent on the parameters
vector $\theta$ and using the polynomial $\varrho_K$. 
On the other hand $K(\theta)$ is
differentiable for all the values of $\theta$ for which
the following equalities in $t$
\begin{equation}
\eta(t,\theta) =  0,\quad \quad 
\frac{\partial \eta(t,\theta)}{\partial t}  =  0,
\label{eq:discriminant}%
\end{equation}
have not real solution.
As detailed in Remark~\ref{rem:critic} below, one has that
the system of equalities \eqref{eq:discriminant} has no solution
for $\theta=[\begin{array}{cc}
\phi_1(x) & \phi_2(x)
\end{array}]^\top$, $\forall x \in\mathbb{R}^2$.
Therefore,  there exists a  bounded domain $\mathcal{D}_1$, with $0\in\mathcal{D}_1$, such that \eqref{eq:discriminant} does
not have any real solution,
for all $\theta\in\mathcal{E}_1$. Hence,
also the function $K(\theta)$ is Lipschitz in any bounded domain $\mathcal{D}_1$, with $0\in\mathcal{D}_1$,
for some $L_A$. Note that choosing a larger $\mathcal{D}_1 $ will render $L_A$ larger,
in general cases.
\end{proof}

\begin{rem}
\label{rem:critic}
Note that the set $\mathcal{N}=\{x\in\mathbb{R}^2:\;\eqref{eq:discriminant}\text{ holds, with }$\linebreak$\theta=[\begin{array}{cc}
\phi_1(x) & \phi_2(x)
\end{array}]^\top\}$
can be computed by defining the ideal \linebreak$\mathcal{J}=\langle \eta(t,\theta),\frac{\partial \eta(t,\theta)}{\partial t},\theta_1-\phi_1(x),
\theta_2-\phi_2(x) \rangle$
and by computing any Groebner basis $\mathcal{G}$,
with respect to the lex order, with $t>_l \theta_1 >_l \theta_2>_l q>_l \alpha$, of such an ideal. Hence, letting $g_1,\dots,g_l$ be the
polynomials in $\mathbb{R}[x]$ such that $\{g_1,\dots,g_l\}=\mathcal{G}\cap\mathbb{R}[x]$,
all the points in the set $\mathcal{N}$ can be obtained as the solution to the following equalities 
\begin{equation*}
g_1(x)  =  0,\quad
\cdots\quad 
g_l(x)  =  0,
\end{equation*}
which can be solved with the algorithms of Section~\ref{sec:solPol}.
By applying Algorithm~\ref{alg:solutionPUR}, it has be proved that $\mathcal{N}$ is empty.
\end{rem}

The following proposition can be easily proved by considering that, by using the techniques
given in \cite{yang1999recent}, there exist a domain 
$\mathcal{D}_2\in\mathbb{R}^2$, with $0\in\mathcal{D}_2$,
such that $\eta(\theta,t)=0$ has a solution in $t$
for $\theta=[\begin{array}{cc}
\phi_1(x) & \phi_2(x)
\end{array}]^\top$, $\forall x\in\mathcal{D}_2$ and that,
by  Assumption~\ref{ass:KKK}, one has that \eqref{eq:conditions} holds for all  $x\in\mathcal{D}_2$.

\begin{prop}
\label{prop:boundeig}
Let Assumption~\ref{ass:KKK} hold. There exists a domain 
$\mathcal{D}_2\subset\mathbb{R}^n$,
with $0\in\mathcal{D}_2$,
 such that, letting $\mathcal{E}_2$
be defined as in \eqref{eq:eee}, with $\mathcal{D}$ replaced by $\mathcal{D}_2$, there exists constant $m\geq 1$ and
${\lambda}>0$, such that,
any solution of \eqref{eq:closedloop} is such that 
\begin{equation}
\Vert x(t) \Vert \leq m e^{-{\lambda} t} \Vert x(0) \Vert,\quad \text{for each fixed }\bar{\theta}\in\mathcal{E}_2.
\label{eq:exponent}
\end{equation}
\end{prop}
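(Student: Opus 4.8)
The plan is to use the conditions \eqref{eq:conditions}, which by Assumption~\ref{ass:KKK} are enforced by the chosen gain $K(\theta)$, to certify that for each frozen $\bar\theta$ the matrix $\tilde A(\bar\theta)$ is Hurwitz with spectral abscissa strictly below $-\lambda$, and then to promote this spectral bound to the uniform exponential estimate \eqref{eq:exponent} by a compactness argument on $\mathcal{E}_2$.

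First I would fix the domain. By the paragraph preceding the statement, the real--root--counting test of \cite{yang1999recent} shows that $\eta(\theta,t)=0$ has a real root in $t$ along the curve $\theta=(\phi_1(x),\phi_2(x))^\top$ for every $x$ in some neighbourhood of the origin; I take $\mathcal{D}_2$ to be a closed bounded such neighbourhood with $0\in\mathcal{D}_2$, small enough that in addition $\eta(\theta,\cdot)$ has no repeated root on the corresponding parameter set (possible by Remark~\ref{rem:critic} and the argument of Proposition~\ref{prop:lipschi}). Then on $\mathcal{D}_2$ the gain $K(\theta)=\varrho_K(\theta,\bar t_\theta)$ of Assumption~\ref{ass:KKK} is well defined and continuous, and by its construction the inequalities \eqref{eq:conditions} hold, i.e. $p_1^2(\bar\theta)-4p_2(\bar\theta)<0$ and $p_1(\bar\theta)>2\lambda$ for every $\bar\theta\in\mathcal{E}_2$, where $\mathcal{E}_2$ is \eqref{eq:eee} with $\mathcal{D}$ replaced by $\mathcal{D}_2$. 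Hence the eigenvalues of $\tilde A(\bar\theta)$ are the complex--conjugate pair $-\tfrac12 p_1(\bar\theta)\pm\tfrac{i}{2}\sqrt{4p_2(\bar\theta)-p_1^2(\bar\theta)}$, with real part $-\tfrac12 p_1(\bar\theta)<-\lambda$.

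Next I would convert this into a Lyapunov estimate. For each $\bar\theta\in\mathcal{E}_2$ the matrix $\tilde A(\bar\theta)+\lambda I$ is Hurwitz, so the Lyapunov equation $(\tilde A(\bar\theta)+\lambda I)^\top P(\bar\theta)+P(\bar\theta)(\tilde A(\bar\theta)+\lambda I)=-I$ has a unique solution $P(\bar\theta)=P(\bar\theta)^\top>0$, depending continuously on $\bar\theta$ since the equation is linear in $P$ with coefficients continuous in $\bar\theta$ and with invertible associated operator. Along any solution of \eqref{eq:closedloop} with parameter frozen at $\bar\theta$ one gets $\tfrac{d}{dt}\big(x^\top P(\bar\theta)x\big)=x^\top\big(\tilde A(\bar\theta)^\top P(\bar\theta)+P(\bar\theta)\tilde A(\bar\theta)\big)x=-\|x\|^2-2\lambda\, x^\top P(\bar\theta)x\le-2\lambda\, x^\top P(\bar\theta)x$, hence $x(t)^\top P(\bar\theta)x(t)\le e^{-2\lambda t}\,x(0)^\top P(\bar\theta)x(0)$. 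Since $\phi_1,\phi_2$ are continuous and $\mathcal{D}_2$ is compact, $\mathcal{E}_2$ is compact, so $\overline p:=\sup_{\bar\theta\in\mathcal{E}_2}\lambda_{\max}(P(\bar\theta))<\infty$ and $\underline p:=\inf_{\bar\theta\in\mathcal{E}_2}\lambda_{\min}(P(\bar\theta))>0$; putting $m:=\sqrt{\overline p/\underline p}\ge1$, from $\underline p\|x(t)\|^2\le x(t)^\top P(\bar\theta)x(t)\le e^{-2\lambda t}\,x(0)^\top P(\bar\theta)x(0)\le e^{-2\lambda t}\overline p\|x(0)\|^2$ one obtains $\|x(t)\|\le m e^{-\lambda t}\|x(0)\|$, which is \eqref{eq:exponent}.

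The main obstacle is precisely the uniformity of $m$ and the choice of a single $\lambda$ over all of $\mathcal{E}_2$: a frozen--$\bar\theta$ argument alone only gives, for each parameter value, a constant $m(\bar\theta)$ that could blow up as $P(\bar\theta)$ becomes ill--conditioned (or as $p_1(\bar\theta)\downarrow 2\lambda$). This is handled by arranging $\mathcal{D}_2$, hence $\mathcal{E}_2$, to be compact --- which additionally forces one to check that $K(\theta)$, and thus $\tilde A(\theta)$, is defined and continuous throughout $\mathcal{E}_2$, i.e. that $\eta(\theta,\cdot)$ has no repeated root there --- and then invoking continuity of $P(\cdot)$ on that compact set. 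A subsidiary point is that it is the strict inequality $p_1(\bar\theta)>2\lambda$ of \eqref{eq:eigs} that upgrades bare Hurwitzness to the quantified rate $-\lambda$ appearing in \eqref{eq:exponent}; with only $p_1(\bar\theta)>0$ one would obtain $m e^{-\lambda' t}$ for an unspecified $\lambda'>0$.
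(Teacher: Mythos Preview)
Your argument is correct and begins exactly as the paper does: the paper's ``proof'' is only the sentence preceding the proposition, which says that by the root--counting test of \cite{yang1999recent} there is a domain $\mathcal{D}_2\ni 0$ on which $\eta(\theta,t)=0$ has a real solution, so that under Assumption~\ref{ass:KKK} the inequalities \eqref{eq:conditions} hold throughout $\mathcal{D}_2$. You reproduce this step and then go further, supplying the Lyapunov/compactness argument that turns the pointwise spectral bound $\mathrm{Re}\,\sigma(\tilde A(\bar\theta))<-\lambda$ into the uniform estimate \eqref{eq:exponent} with a single $m\ge 1$; the paper simply leaves this passage implicit. So your route is the same as the paper's, only more complete on the uniformity of $m$.
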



\begin{prop}
\label{prop:boundedtheta}
Let Assumption~\ref{ass:KKK} hold. 
Let $L_A$ be the Lipschitz constant chosen in the proof of Proposition~\ref{prop:lipschi}
and let $m$ and ${\lambda}$ be the constants chosen in the proof of Proposition~\ref{prop:boundeig}.
There exists $\mathcal{D}_3\subset\mathbb{R}^2$,
with $0\in\mathcal{D}_3$,
such that
\begin{equation}
\Vert \dot{\theta}  \Vert < \frac{{\lambda}^2}{4 L_A m \log(m)},\quad \text{for all }\dot{\theta}\in\mathcal{F},
\label{eq:boundedot}
\end{equation}
 where $\mathcal{F}$ is defined as in \eqref{eq:fff}, 
with $\mathcal{D}$ replaced by $\mathcal{D}_3$.
\end{prop}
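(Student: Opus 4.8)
The plan is to reduce the statement to an elementary continuity argument: $\dot\theta$, viewed as a function of the state $x$ through $\dot\theta=(\psi_1(x),\psi_2(x))^\top$, is continuous in a neighbourhood of the origin and vanishes at $x=0$, so that by shrinking $\mathcal{D}_3$ around $0$ one can force $\Vert\dot\theta\Vert$ below \emph{any} prescribed positive threshold — in particular below the number $\frac{\lambda^2}{4L_A m\log m}$, which is a fixed constant once $L_A$, $m$, $\lambda$ have been pinned down by Propositions~\ref{prop:lipschi} and \ref{prop:boundeig} (and which we may read as $+\infty$, making the claim vacuous, in the degenerate case $m=1$).

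First I would take $\mathcal{D}_3\subseteq\mathcal{D}_1\cap\mathcal{D}_2$, so that the constants $L_A$, $m$, $\lambda$ inherited from the two previous propositions remain valid and, more importantly, so that $K(\theta)$ is continuous (indeed Lipschitz) on the corresponding parameter set: this is exactly what Proposition~\ref{prop:lipschi} gives, using Remark~\ref{rem:critic} to guarantee that the critical set $\mathcal{N}$ is empty, hence that the root $\bar t_\theta$ of $\eta(\theta,\cdot)$ — and therefore $K(\theta)=\varrho_K(\theta,\bar t_\theta)$ — does not branch over a neighbourhood of the relevant values of $\theta$. Consequently $\psi_1$ and $\psi_2$, which by the discussion preceding Assumption~\ref{ass:exp} are built from the polynomial data of \eqref{eq:dottheta} together with $K(\cdot)$, are continuous on $\mathcal{D}_3$.

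Next I would check that $\psi_1(0)=\psi_2(0)=0$. Evaluating \eqref{eq:dottheta} at $x=0$ (that is, $\alpha=0$, $q=0$) gives $\dot\alpha|_{x=0}=\kappa_\alpha M\big((a_n\cdot 0+b_n\cdot 0+c_n)\cdot 0+d_nK(0)\cdot 0\big)+0=0$, whence $\dot\theta_1|_{x=0}=\kappa_\alpha M\,b_n\cdot 0=0$ and likewise $\dot\theta_2|_{x=0}=0$; note $K(0)$ is finite since $0\in\mathcal{D}_1$. Thus the map $x\mapsto\psi(x)=(\psi_1(x),\psi_2(x))^\top$ is continuous near the origin with $\psi(0)=0$. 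Applying the definition of continuity with $\varepsilon=\frac{\lambda^2}{4L_A m\log m}>0$ yields a radius $r>0$ with $\Vert\psi(x)\Vert<\varepsilon$ for all $\Vert x\Vert<r$; replacing $\mathcal{D}_3$ by $\mathcal{D}_3\cap\{x:\Vert x\Vert<r\}$ (still an open neighbourhood of $0$) makes every $\dot\theta\in\mathcal{F}$, i.e. every $\dot\theta=\psi(x)$ with $x\in\mathcal{D}_3$, satisfy \eqref{eq:boundedot}.

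The only non-routine point — the real obstacle — is the continuity of $\psi_1,\psi_2$ near the origin, which hinges on $K(\theta)$ being continuous there; this is precisely the content borrowed from Proposition~\ref{prop:lipschi} and Remark~\ref{rem:critic}, so once those are invoked the remainder is a standard $\varepsilon$–$\delta$ estimate and the shrinking of the domain.
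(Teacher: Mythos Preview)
Your argument is correct and is essentially the same as the paper's: both observe that $\dot\theta=\psi(x)$ is continuous near the origin with $\psi(0)=0$, so shrinking $\mathcal{D}_3$ forces $\Vert\dot\theta\Vert$ below the prescribed constant. The paper's version is terser---it simply notes from \eqref{eq:dottheta} that $\dot\theta_i$ equals an affine-in-$\alpha$ factor times $\dot\alpha$ and concludes---whereas you make explicit the continuity of $K(\theta)$ (via Proposition~\ref{prop:lipschi} and Remark~\ref{rem:critic}) and the $\varepsilon$--$\delta$ step, which the paper leaves implicit.
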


\begin{proof}
By \eqref{eq:dottheta}, $\dot{\theta}$ is a linear function of $\dot{\alpha}$, and $\frac{\dot{\theta}_i}{\dot{\alpha}}$
is affine in $\alpha$. Hence, \eqref{eq:boundedot} holds in some domain $\mathcal{D}_3$, with $0\in\mathcal{D}_3$.
\end{proof}

By Proposition~\ref{prop:lipschi}, Proposition~\ref{prop:boundeig} and Proposition~\ref{prop:boundedtheta},
one has that system~\eqref{eq:closedloop} is such that Assumption~\ref{ass:exp} holds, with 
$\mathcal{D}=\mathcal{D}_1\cap\mathcal{D}_2\cap\mathcal{D}_3$, where $\mathcal{D}_1$, $\mathcal{D}_2$ and
$\mathcal{D}_3$ are the domains chosen in the proofs of Proposition~\ref{prop:lipschi}, Proposition~\ref{prop:boundeig} and Proposition~\ref{prop:boundedtheta}, respectively.
The following two theorems and lemma show that the gain $K(\theta)$
and a domain $\mathcal{L}\subset\mathcal{D}$
are a solution to Problem~\ref{prob:SOFLPV}.

\begin{thm}
\cite{mohammadpour2012control}
Let system~\eqref{eq:closedloop} be given. 
Let $\Theta$ be the set of all the admissible parameters $\theta$.
Let $\tilde{A}(\cdot)$ be Lipschitz continuous,
with a Lipschitz constant $L_A$, for each $\theta\in\Theta$.
Assume that, for any fixed $\bar{\theta}\in\Theta$,
any solution to the LTI system
$
\dot{x}(t)=\tilde{A}(\bar{\theta})x(t)
$
is such that there exist constants $m\geq 1$ and $\lambda>0$ such that a$
	\Vert x(t) \Vert \leq m e^{-\lambda t} \Vert x(0) \Vert$. 
	If $\Vert \dot{\theta}(t)  \Vert < \frac{{\lambda}^2}{4 L_A m \log(m)},\, \forall t \geq 0$,
then $0$ is exponentially stable, with respect to system~\eqref{eq:closedloop}.
\label{thm:stability}
\end{thm}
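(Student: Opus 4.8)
The statement is a standard \emph{slowly time--varying} result, and I would prove it by directly estimating the state--transition matrix $\Phi(t,t_0)$ of the LPV system $\dot x=\tilde A(\theta(t))x$ associated with \eqref{eq:closedloop}, showing that $\|\Phi(t,t_0)\|\le \tilde m\,e^{-\tilde\lambda(t-t_0)}$ for suitable constants $\tilde m\ge 1$ and $\tilde\lambda>0$; exponential stability of the origin follows at once. The hypothesis that every frozen system $\dot x=\tilde A(\bar\theta)x$ satisfies $\|x(t)\|\le m e^{-\lambda t}\|x(0)\|$ is equivalent to $\|e^{\tilde A(\bar\theta)s}\|\le m e^{-\lambda s}$ for all $s\ge 0$ and all admissible $\bar\theta$, and this is the bound I would actually use throughout.

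First I would fix $t_0$, freeze the parameter at $\bar\theta:=\theta(t_0)$, and write $\tilde A(\theta(t))=\tilde A(\bar\theta)+\Delta(t)$ with $\Delta(t):=\tilde A(\theta(t))-\tilde A(\bar\theta)$. The variation--of--constants formula gives $\Phi(t,t_0)=e^{\tilde A(\bar\theta)(t-t_0)}+\int_{t_0}^{t}e^{\tilde A(\bar\theta)(t-\tau)}\Delta(\tau)\Phi(\tau,t_0)\,d\tau$. Using the Lipschitz bound on $\tilde A$ together with $\|\theta(\tau)-\theta(t_0)\|\le\int_{t_0}^{\tau}\|\dot\theta(\sigma)\|\,d\sigma$, on an interval $[t_0,t_0+T]$ I obtain $\|\Delta(\tau)\|\le L_A\mu(\tau-t_0)\le L_A\mu T$, where $\mu:=\sup_{t\ge0}\|\dot\theta(t)\|$ is the quantity assumed to be smaller than $\lambda^2/(4L_A m\log m)$. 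Setting $g(t):=\|\Phi(t,t_0)\|$, multiplying the resulting integral inequality by $e^{\lambda(t-t_0)}$, and applying Gronwall's lemma yields $g(t)\le m\,e^{-\lambda(t-t_0)}\exp\!\big(mL_A\mu T(t-t_0)\big)$ for $t\in[t_0,t_0+T]$, hence $\|\Phi(t_0+T,t_0)\|\le m\exp\!\big(-\lambda T+mL_A\mu T^2\big)=:\rho$, a bound independent of $t_0$.

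It remains to pick $T>0$ with $\rho<1$, i.e. with $mL_A\mu\,T^{2}-\lambda T+\log m<0$. Regarded as a quadratic inequality in $T$ with positive leading coefficient $mL_A\mu$, this is solvable for some $T>0$ if and only if its discriminant is positive, i.e. if and only if $\lambda^{2}-4mL_A\mu\log m>0$, which is precisely the hypothesis $\mu<\lambda^{2}/(4L_A m\log m)$ (the case $m=1$ is trivial: then $\|e^{\tilde A s}\|\le e^{-\lambda s}$ and $V(x)=\|x\|^{2}$ works directly). Fixing such a $T$, the cocycle identity $\Phi(t_0+kT,t_0)=\Phi(t_0+kT,t_0+(k-1)T)\cdots\Phi(t_0+T,t_0)$ gives $\|\Phi(t_0+kT,t_0)\|\le\rho^{k}$, and combining this with the uniform bound $g(t)\le m\,e^{mL_A\mu T^{2}}$ for $t-t_0\le T$ produces $\|\Phi(t,t_0)\|\le\tilde m\,e^{-\tilde\lambda(t-t_0)}$ with $\tilde\lambda=-\log\rho/T>0$ and $\tilde m=m\,e^{mL_A\mu T^{2}}/\rho$, uniformly in $t_0$; exponential stability of the origin of \eqref{eq:closedloop} follows.

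The main obstacle is the bookkeeping in the last two steps: choosing the Gronwall estimate crude enough to keep the algebra transparent yet tight enough to reproduce exactly the constant $\lambda^{2}/(4L_A m\log m)$ (replacing the coarse bound $\|\Delta(\tau)\|\le L_A\mu T$ by the sharper $\|\Delta(\tau)\|\le L_A\mu(\tau-t_0)$ would improve the factor $4$ to $2$), verifying that every estimate is uniform in the freezing time $t_0$, and passing cleanly from ``strict contraction once every $T$ seconds'' to a genuine exponential bound valid for all $t\ge t_0$. An alternative would be a converse--Lyapunov argument using $P(\bar\theta)=\int_{0}^{\infty}e^{\tilde A(\bar\theta)^{\top}s}e^{\tilde A(\bar\theta)s}\,ds$ and controlling $\|\dot P(\theta(t))\|$ through the Lipschitz constant of $\theta\mapsto P(\theta)$; this is conceptually simpler but gives a more conservative threshold (scaling like $m^{-4}$ rather than $(m\log m)^{-1}$), so I would present the transition--matrix argument as the main proof.
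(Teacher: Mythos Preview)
The paper does not prove this theorem at all: it is quoted verbatim from \cite{mohammadpour2012control} and used as a black box, so there is no ``paper's own proof'' to compare against. Your argument is the classical Desoer-type slowly--varying proof (freeze, variation of constants, Gronwall on each window of length $T$, then concatenate via the cocycle identity), and it is correct; in particular the quadratic $mL_A\mu T^{2}-\lambda T+\log m<0$ in $T$ reproduces exactly the threshold $\mu<\lambda^{2}/(4L_A m\log m)$ stated in the theorem, which is the only point where one could have lost the constant. The alternative converse--Lyapunov route you mention, with $P(\theta)=\int_{0}^{\infty}e^{\tau\tilde A(\theta)^{\top}}e^{\tau\tilde A(\theta)}\,d\tau$, is in fact the device the paper employs in the proof of Lemma~\ref{lem:subdomain} (for the different purpose of constructing an invariant sublevel set), so your remark that it yields a more conservative constant than the transition--matrix argument is consistent with why the authors import the sharper bound from the literature rather than derive it themselves.
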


\begin{lem}
\label{lem:subdomain}
Let Assumption~\ref{ass:KKK} and Assumption~\ref{ass:exp} hold.
There exists $\mathcal{L}\subset\mathcal{D}$, such that,
if $x(0)\in\mathcal{L}$, then
$x(t)\in\mathcal{D}$, $\forall t \geq 0$.
\end{lem}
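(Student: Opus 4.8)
The plan is to turn the exponential estimate guaranteed by Theorem~\ref{thm:stability} into the existence of a small invariant set contained in $\mathcal{D}$, using a standard maximal-interval argument to break the apparent circularity.

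First I would use that $\mathcal{D}$ is a domain with $0\in\mathcal{D}$, hence open, to fix an $r>0$ for which the closed ball $\overline{B}_r:=\{x\in\mathbb{R}^n:\Vert x\Vert\le r\}$ is contained in $\mathcal{D}$. The key observation is that, as long as a solution $x(\cdot)$ of \eqref{eq:closedloop} (with the gain $K(\theta)$ of Assumption~\ref{ass:KKK}) remains in $\mathcal{D}$, one has $\theta(t)\in\mathcal{E}$ and $\dot{\theta}(t)\in\mathcal{F}$ by the very definitions \eqref{eq:eee} and \eqref{eq:fff}; hence items~\ref{point:lip})--\ref{point:dotth}) of Assumption~\ref{ass:exp} provide exactly the hypotheses of Theorem~\ref{thm:stability}. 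Reading off the constants hidden in the notion of exponential stability in the conclusion of that theorem, this yields $M\ge 1$ and $\mu>0$, independent of the initial condition, such that $\Vert x(t)\Vert\le M e^{-\mu t}\Vert x(0)\Vert$ on every interval starting at $0$ on which $x(\cdot)$ stays in $\mathcal{D}$.

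Then I would set $\mathcal{L}:=\{x\in\mathbb{R}^n:\Vert x\Vert\le r/(2M)\}$, a ball of positive radius with $0\in\mathcal{L}\subset\mathcal{D}$ (since $r/(2M)<r$ and $\overline{B}_r\subset\mathcal{D}$). To prove invariance, fix $x(0)\in\mathcal{L}$ and let $T^\star:=\sup\{T\ge 0:\ x(\cdot)\text{ is defined on }[0,T]\text{ and }\Vert x(t)\Vert\le r\text{ for all }t\in[0,T]\}$. Since the right-hand side of \eqref{eq:closedloop} is locally Lipschitz in $x$ on $\mathcal{D}$ (cf.\ Proposition~\ref{prop:lipschi} and the polynomial nature of $\phi_1,\phi_2$), the solution exists, is unique, and, because $\Vert x(0)\Vert<r$, one has $T^\star>0$ by continuity. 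On $[0,T^\star)$ the trajectory lies in $\overline{B}_r\subset\mathcal{D}$, so the estimate applies and gives $\Vert x(t)\Vert\le M e^{-\mu t}\,\Vert x(0)\Vert\le (r/2)\,e^{-\mu t}\le r/2$. If $T^\star<\infty$, boundedness of the trajectory (and of the vector field on $\overline{B}_r$) forces $x(\cdot)$ to have a limit $x^\star$ as $t\to T^\star$ with $\Vert x^\star\Vert\le r/2<r$, and the solution can be prolonged with $\Vert x(t)\Vert<r$ past $T^\star$, contradicting maximality. Hence $T^\star=\infty$ and $x(t)\in\overline{B}_r\subset\mathcal{D}$ for all $t\ge 0$, which is the claim.

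The main obstacle — really the only subtle point — is the circularity just described: the exponential bound of Theorem~\ref{thm:stability} is available only while the trajectory stays in $\mathcal{D}$ (that is what makes $\theta\in\mathcal{E}$, $\dot{\theta}\in\mathcal{F}$ and Assumption~\ref{ass:exp} usable), whereas the goal is to conclude that the trajectory never leaves $\mathcal{D}$; the maximal-interval argument is the standard device that closes this loop. A minor point to keep in mind is that the overshoot constant $M$ need not coincide with the constant $m$ appearing in item~\ref{point:norm}) of Assumption~\ref{ass:exp}, but this is irrelevant here, since only $M\ge 1$ and $\mu>0$ are used and $\mathcal{L}$ is defined in terms of this $M$.
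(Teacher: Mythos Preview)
Your approach is genuinely different from the paper's. The paper does not invoke Theorem~\ref{thm:stability} at all in proving Lemma~\ref{lem:subdomain}; instead it builds the parameter-dependent Lyapunov function $V(x,\theta)=x^\top P(\theta)x$ with $P(\theta)=\int_0^\infty e^{\tau\tilde{A}(\theta)^\top}e^{\tau\tilde{A}(\theta)}\,d\tau$, shows that $P$ is Lipschitz in $\theta$ (by bounding $\partial P/\partial\theta_i$), obtains the pointwise inequality $\dot V\le x^\top(L_P\Vert\dot\theta\Vert-1)x$, and takes $\mathcal{L}$ to be an intersection over $\theta\in\mathcal{E}$ of sublevel sets of $V$ contained in the region $\{x\in\mathcal{D}:\Vert[\begin{array}{cc}\psi_1(x)&\psi_2(x)\end{array}]^\top\Vert<L_P^{-1}\}$. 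Your idea of using the exponential estimate directly together with a maximal-interval continuation is cleaner in spirit and would yield a simpler $\mathcal{L}$ (a ball).

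However, there is a real gap. Theorem~\ref{thm:stability}, as stated in the paper, has hypotheses that must hold for all $t\ge 0$ and a conclusion (``$0$ is exponentially stable'') that is also global in time; it does not assert that if the hypotheses hold only on a finite interval $[0,T^\star)$ then the bound $\Vert x(t)\Vert\le Me^{-\mu t}\Vert x(0)\Vert$ holds on that interval, nor that the constants $M,\mu$ are uniform over all admissible parameter trajectories. Your sentence ``reading off the constants hidden in the notion of exponential stability \dots\ on every interval starting at $0$ on which $x(\cdot)$ stays in $\mathcal{D}$'' is exactly where this is assumed without proof, and the continuation device does not close the loop by itself: it \emph{presupposes} the finite-interval estimate. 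One can repair the argument (for instance, extend $\theta(\cdot)$ past $T^\star$ by freezing it, apply Theorem~\ref{thm:stability} to the extended trajectory, and check that its proof produces constants depending only on $L_A,m,\lambda$), but doing so means opening up the proof of Theorem~\ref{thm:stability}, and that proof is precisely the Desoer-type Lyapunov construction the paper carries out directly for Lemma~\ref{lem:subdomain}.
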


\begin{proof}
By Assumption~\ref{ass:KKK}, one has that \eqref{eq:conditions} holds.
Hence, by \cite{desoer1969slowly}, there exists a positive definite matrix $P(\theta)$, such that
\begin{subequations}
\begin{eqnarray}
-I & = & \tilde{A}(\theta)P(\theta)+\tilde{A}(\theta)^\top P(\theta),\label{eq:classlyap}\\
P(\theta) & = & \textstyle\int_{0}^{\infty} e^{\tau \tilde{A}(\theta)^\top}e^{\tau \tilde{A}(\theta)}d\tau,
\end{eqnarray}
\end{subequations}
where $I$ is the 2--dimensional identity matrix, for all $\theta\in\mathcal{E}$. By 
the proof of Proposition~\ref{prop:lipschi}, one has that the matrix $\tilde{A}(\theta)$
is differentiable. Hence, by \cite{wilcox1967exponential},
one has that $\frac{\partial e^{\tau \tilde{A}(\theta)}}{\partial \theta_i}=\int_0^\tau e^{(\tau-u) \tilde{A}(\theta)}
\frac{\partial \tilde{A}(\theta)}{\partial \theta_i}e^{u \tilde{A}(\theta)}du$, for $i=1,2$. 
Therefore, by item \ref{point:lip}) and item \ref{point:norm}) of Assumption~\ref{ass:exp},
one has that there exists constants $C_i>0$ and $\lambda_i>0$ such that
$\Vert \frac{\partial P(\theta)}{\partial \theta_i} \Vert \leq \int_{0}^{\infty} C_i\tau e^{-2\lambda_i\tau}d\tau<\infty$,
$i=1,2$.
Hence, there exists a constant $L_P$, such that $\Vert P(\theta)-P(\theta') \Vert < L_P \Vert \theta
-\theta' \Vert$, for each $\theta$, $\theta'\in\mathcal{E}$.
Hence, due to \eqref{eq:classlyap}, the function 
 $V=x^\top P(\theta)x$ is such that
$\dot{V}\leq x^\top(L_P \Vert \dot{\theta}\Vert  -1)x$, for $\dot{\theta}=[\begin{array}{cc}
\psi_1(x) & \psi_2(x) 
\end{array}]^\top$, $\forall x \in \mathcal{D}$.
Hence, by the continuity of the functions $\psi_1(x)$ and $\psi_2(x)$ and 
since $\psi_1(0)=\psi_2(0)=0$,
there exists a domain $\mathcal{H} = \{ x\in\mathcal{D}:\,\Vert [\begin{array}{cc}
\psi_1(x) & \psi_2(x)
\end{array}]^\top \Vert < L_P^{-1}   \}$
such that $0\in\mathcal{H}$. Thus, let $c>0$ be the largest constant
such that $\mathcal{W}(\theta)=\{x\in\mathbb{R}^2:\;x^\top P(\theta)x<c \}$
is a subset of $\mathcal{H}$, for all $\theta\in\mathcal{E}$.
Define the set $\mathcal{L}=\cap_{\theta\in\mathcal{E}}\mathcal{W}(\theta)$. 
Since $V$ is a Lyapunov function, with respect to system~\eqref{eq:closedloop},
one has that, if $x(0)\in\mathcal{L}$, then $x(t)\in\mathcal{H}\subset\mathcal{D}$,
for all times $t\geq 0$.
\end{proof}

\begin{thm}
\label{thm:conv}
Let the assumptions of Lemma~\ref{lem:subdomain} hold. 
Then $0$ is exponentially stable for system~\eqref{eq:closedloop},
with $\mathcal{L}$ being a conservative estimate of the attraction domain.
\end{thm}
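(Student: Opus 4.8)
The plan is to deduce Theorem~\ref{thm:conv} directly from Theorem~\ref{thm:stability}, using Lemma~\ref{lem:subdomain} to guarantee that the hypotheses of Theorem~\ref{thm:stability}---which, by Assumption~\ref{ass:exp}, are available only for parameters ranging in $\mathcal{E}$ and parameter rates ranging in $\mathcal{F}$---actually hold along the whole forward orbit of any trajectory issued from $\mathcal{L}$. Concretely, I would fix an arbitrary $x(0)\in\mathcal{L}$, show that its trajectory stays in the region where Assumption~\ref{ass:exp} is usable, and then quote the slowly--varying stability theorem.

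First I would fix $x(0)\in\mathcal{L}$ and let $x(\cdot)$ be the corresponding solution of~\eqref{eq:closedloop} (equivalently, of~\eqref{eq:LPV} with $\delta=K(\theta)\alpha$ and $K(\theta)$ as in Assumption~\ref{ass:KKK}). By Lemma~\ref{lem:subdomain}, $x(t)\in\mathcal{D}$ for all $t\geq 0$; since $\theta_1(t)=\phi_1(x(t))$, $\theta_2(t)=\phi_2(x(t))$, $\dot\theta_1(t)=\psi_1(x(t))$ and $\dot\theta_2(t)=\psi_2(x(t))$, the definitions~\eqref{eq:eee}--\eqref{eq:fff} yield $\theta(t)\in\mathcal{E}$ and $\dot\theta(t)\in\mathcal{F}$ for all $t\geq 0$. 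Now Assumption~\ref{ass:exp} supplies exactly the three ingredients of Theorem~\ref{thm:stability}: by item~\ref{point:lip}), $\tilde{A}(\cdot)$ is Lipschitz on $\mathcal{E}$ with constant $L_A$; by item~\ref{point:norm}), there exist $m\geq 1$, $\lambda>0$ with $\Vert x(t)\Vert\leq m e^{-\lambda t}\Vert x(0)\Vert$ for every frozen system $\dot{x}=\tilde{A}(\bar{\theta})x$, $\bar{\theta}\in\mathcal{E}$; and by item~\ref{point:dotth}) together with the confinement just shown, $\Vert\dot\theta(t)\Vert<\lambda^2/(4L_A m\log m)$ for all $t\geq 0$. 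Applying Theorem~\ref{thm:stability} with admissible--parameter set $\Theta=\mathcal{E}$ then gives constants $\bar{m}\geq 1$, $\mu>0$, independent of the particular $x(0)\in\mathcal{L}$, such that $\Vert x(t)\Vert\leq \bar{m}\,e^{-\mu t}\Vert x(0)\Vert$ for all $t\geq 0$.

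Since $x(0)\in\mathcal{L}$ was arbitrary, $0$ is exponentially stable for system~\eqref{eq:closedloop} with $\mathcal{L}$ contained in its domain of attraction; and because $\mathcal{L}$ was built in the proof of Lemma~\ref{lem:subdomain} as the intersection over $\theta\in\mathcal{E}$ of the sublevel sets $\mathcal{W}(\theta)=\{x:x^\top P(\theta)x<c\}$, it is in general a strict subset of the true attraction domain, i.e.\ a conservative estimate. The only step that needs care is matching the ``global--in--$\theta$'' statement of Theorem~\ref{thm:stability} with the ``local--in--$\mathcal{D}$'' hypotheses actually at hand: one must verify that the forward invariance of Lemma~\ref{lem:subdomain} confines not merely $x$ to $\mathcal{D}$ but also $\theta$ to $\mathcal{E}$ and $\dot\theta$ to $\mathcal{F}$, which is immediate from~\eqref{eq:eee}--\eqref{eq:fff} because $\theta$ and $\dot\theta$ are the images of $x\in\mathcal{D}$ under $(\phi_1,\phi_2)$ and $(\psi_1,\psi_2)$. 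Once this is secured, the conclusion is a direct invocation of Theorem~\ref{thm:stability}, with $\mathcal{L}$ playing the role of the region of the state space over which the slowly--varying hypotheses are valid.
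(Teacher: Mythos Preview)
Your proof is correct and follows essentially the same approach as the paper's: invoke Lemma~\ref{lem:subdomain} to keep $x(t)$ in $\mathcal{D}$ (hence $\theta(t)\in\mathcal{E}$ and $\dot\theta(t)\in\mathcal{F}$), then apply Theorem~\ref{thm:stability} via Assumption~\ref{ass:exp}. The paper's proof is simply a two--sentence condensation of what you have written; your additional explicit verification that confinement to $\mathcal{D}$ forces $\theta\in\mathcal{E}$ and $\dot\theta\in\mathcal{F}$ is a welcome clarification rather than a departure.
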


\begin{proof}
By Lemma~\ref{lem:subdomain}, if $x(0)\in\mathcal{L}$, then $x(t)\in\mathcal{D}$, for all times $t\geq 0$.
Hence, since, if $x(t)\in\mathcal{D}$, for all times $t\geq 0$, all the assumptions of Theorem~\ref{thm:stability} hold,
then $0$ is exponentially stable, with respect to system~\eqref{eq:closedloop}.
\end{proof}

By Theorem~\ref{thm:conv}, 
 $K(\theta)$ and $\mathcal{L}$ are a solution to Problem~\ref{prob:SOFLPV}.

A gain $K$, dependent on the parameters $\theta$
 has been computed by solving \eqref{eq:conditions}, with $\lambda=15$,
 by using the procedure given in Section~\ref{sec:ineq} and 
 Algorithm~\ref{alg:solutionPUR} to compute the polynomials $\eta(\theta,t)$
 and $\varrho_K(\theta,t)$. 
One has that the domain $\mathcal{D}=\{x=[\begin{array}{cc}
\alpha & q
\end{array}]^\top\in\mathbb{R}^2:\;
\vert \alpha \vert \leq 100,\,\vert q \vert \leq 100 \}$ is such that all the conditions of
 Assumption~\ref{ass:exp} hold, with $L_A = 50$, ${\lambda}=14.5$ and $m=1.0026$.
 A simulation has been carried out, with $K(\theta)=\varrho_K(\theta,\bar{t})$,
 where $\bar{t}$ is a solution to $\eta(\theta,t)=0$, from 
 initial conditions starting inside the domain $\mathcal{Q}=\{x\in\mathcal{D}:\;\alpha\geq 0, \,q\geq -5,\,  
 q \leq 100 \} \cup \{x\in\mathcal{D}:\;\alpha \leq 0,\,q\geq-100,\,q\leq 5 \}$.
 Figure~\ref{fig:clostraj} shows the trajectories of the system~\eqref{eq:closedloop} with $x(0)\in\mathcal{Q}$.
 
 \begin{figure}[htb]
	 \centering
	 \includegraphics[width=0.43\textwidth]{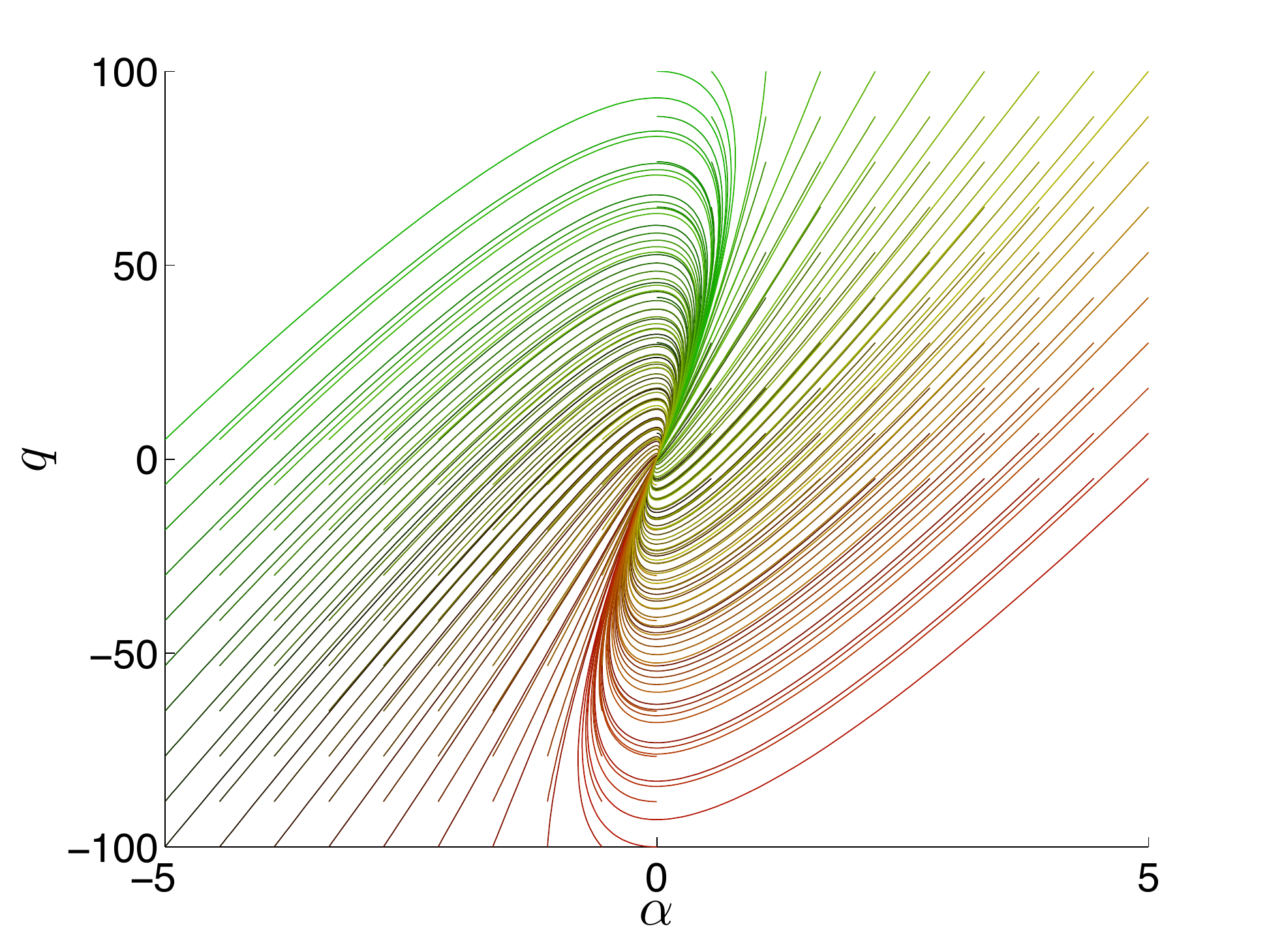}
	 \caption{Trajectories of system~\eqref{eq:closedloop}, with $x(0)\in\mathcal{Q}$.\label{fig:clostraj}}
 \end{figure}

\section{Conclusions}
Three algorithmic procedures for solving systems of polynomial inequalities are described. The first step,
common to the three, is the classical \cite{anderson1977output} reduction to  a system of equalities.
To solve this last system two available methods have been adapted and a new one has been derived.
The latter one has been used to solve the Static Output Feedback stabilization problem for a LPV system.

\bibliographystyle{ieeetr}
\bibliography{biblio}

\end{document}